\theoremstyle{definition}
\newtheorem{definition}{Definition}
\newtheorem{remark}{Remark}
\newtheorem{problem}{Problem}
\theoremstyle{plain}
\newtheorem{theorem}{Theorem}
\newtheorem{lemma}{Lemma}
\title{\LARGE \bf
On Decentralized Minimax Control with Nested Subsystems}
\author{Aditya Dave, {\itshape{Student Member, IEEE,}} Nishanth Venkatesh, {\itshape{Student Member, IEEE,}}  \\
Andreas A. Malikopoulos, {\itshape{Senior Member, IEEE}} 
	\thanks{This research was supported by the Sociotechnical Systems Center (SSC) at the University of Delaware.} %
	\thanks{The authors are with the Department of Mechanical Engineering, University of Delaware, Newark, DE 19716 USA (email: \texttt{adidave@udel.edu; nish@udel.edu; andreas@udel.edu).}} }
\begin{document}

\maketitle
\thispagestyle{empty}

\begin{abstract}
In this paper, we investigate a decentralized control problem with nested subsystems, which is a general model for one-directional communication amongst many subsystems. The noises in our dynamics are modelled as uncertain variables which take values in finite sets. The objective is to minimize a worst-case shared cost. We demonstrate how the prescription approach can simplify the information structure and derive a structural form for optimal control strategies. The structural form allows us to restrict attention to control strategies whose domains do not grow in size with time, and thus, this form can be utilized in systems with long time horizons. Finally, we present a dynamic program to derive the optimal control strategies and validate our results with a numerical example.
\end{abstract}

\section{Introduction}

Modern engineering systems with interdependent subsystems and limited communication have directed attention towards decentralized control problems with cooperative agents.
Such systems include social media platforms \cite{Dave2020SocialMedia}, large hierarchical organizations \cite{zabojnik2002centralized}, and vehicle platoons \cite{mahbub2021_platoonMixed}.
Typically, in decentralized systems, different agents must coordinate their actions with access to different information. Such systems are characterized by their \textit{information structure}, which describes the information available to each agent at each time \cite{mahajan2012}.
The literature in decentralized control has predominantly focused on minimizing the expected cost shared by all agents \cite{nayyar2015structural, Dave2019a, 17, dave2020structural, Dave2021a, Malikopoulos2021, Dave2022a}, or a given norm of a shared cost in linear time-invariant systems \cite{rotkowitz2005characterization, shah2013cal, lessard2012optimal}, for specific information structures. In these problems, the system noises are modeled as random variables with known distributions.

More recently, there has been an interest in decentralized control problems with a worst-case cost \cite{gagrani2017decentralized, gagrani2020worst}. In these problems, we assume no knowledge of the distributions of the noises but consider that their feasible sets are known. Such noises are called \textit{uncertain variables}, which are non-stochastic analogues of random variables \cite{nair2013nonstochastic, rangi2019towards}. The objective is to minimize a maximum possible cost given only the feasible sets for all uncertain variables. This conservative measure of system performance is appropriate for (1) safety-critical applications that require concrete performance guarantees or (2) systems where only feasible sets can be determined a priori for all noises. For centralized worst-case problems, dynamic programs (DPs) can be used to compute the optimal control strategies \cite{bertsekas1973sufficiently, bernhard2003minimax}, but these DPs cannot directly be applied to decentralized problems because of information asymmetry among agents \cite{mahajan2012}. 
This issue has been resolved for systems with partial history sharing among all agents using the common information approach. While this approach is valid for a large number of models \cite{gagrani2017decentralized}, its computational tractability suffers when the private information of agents increases with time, because of the concurrent growth in domains of the optimal control strategies\cite{17}. Increasing private information may occur in systems with one-directional communication \cite{xie2020optimally, Dave2021a} like 
large organizations, where management may not share all observations with employees \cite{zabojnik2002centralized} and vehicle platoons \cite{mahbub2021_platoonMixed}. 
However, with one-directional communication, the information available to a group of agents may be nested within the information available to another group of agents.

In this paper, we analyze a general decentralized system with \textit{nested subsystems}, which allows one-directional communication among agents. We consider multiple subsystems with multiple agents, each of whom partially observes a global state. Agents sequentially share data with all agents within their subsystem. Thus, there is common information within each subsystem and private information available only to individual agents. In our information structure, we consider that the common information in each subsystem is also available to all agents in preceding subsystems, but the common information in preceding subsystems may contain data which is unavailable to agents in a given subsystem. For example, consider a problem with $3$ subsystems. Then, the common information of all agents in subsystem $3$ is a subset of the common information in subsystem $2$, which in turn, is a subset of the common information in subsystem $1$.
Note that partial history sharing is a special case of this model which occurs when the problem has just one subsystem.

Our main contribution in this paper is the derivation of a structural form for optimal control strategies in decentralized minimax problems with nested subsystems (Theorem \ref{struct_result_k}), which can be used to compute the strategies using a DP (Subsection \ref{subsection:DP}). We model all noises using uncertain variables with known feasible sets. In our exposition, we use a combination of \textit{the person-by-person approach} \cite{nayyar2015structural}, and \textit{the prescription approach} \cite{dave2020structural,Dave2021a} to iteratively derive time-invariant domains for optimal strategies. Our analysis is more involved than simply applying the common information approach, which sets up a centralized problem with information available to \textit{all} agents in the system. However, in comparison with the common information approach: (1) our approach requires milder conditions on private information to achieve time invariant domains for optimal control strategies (Remark \ref{remark_common_info}), and (2) thus, our DP improves the computational tractability of a larger class of systems. 
As it is standard in the literature \cite{bertsekas1973sufficiently, gagrani2017decentralized}, we first analyze the terminal cost problem and then extend our results to allow additive costs. 

The remainder of the paper proceeds as follows. In Section \ref{subsection:Formulation}, we formulate the problem. In Section \ref{section:equivalent}, we construct an equivalent problem with a simpler information structure. In Section \ref{section:structural}, we analyze the problem and derive our main results, and in Section \ref{section:example}, we present a numerical example validating our results. Finally, in Section \ref{section:conclusion}, we present concluding remarks and discuss ongoing work.

\section{Problem Formulation} \label{subsection:Formulation}

We consider a decentralized system with a set of subsystems $\mathcal{N} = \{1,\dots,N\}$, $N \in \mathbb{N}$. Each subsystem $n \in \mathcal{N}$ contains $K^n \in \mathbb{N}$ agents in a set $\mathcal{K}^n = \{1,\dots,K^n\}$, and each agent acts across $T \in \mathbb{N}$ discrete time steps.
For each $t=0,\dots, T$, the state of the system is denoted by an uncertain variable $X_t$ which takes values in a finite set $\mathcal{X}_t$ and the control action of each agent $k \in \mathcal{K}^n$, $n \in \mathcal{N},$ is denoted by an uncertain variable $U_t^{k,n}$ which takes values in a finite set $\mathcal{U}_t^{k,n}$. Recall that uncertain variables are non-stochastic analogues of random variables with known feasible sets but unknown distributions \cite{nair2013nonstochastic}.
We denote all actions in a subsystem $n \in \mathcal{N}$ collectively by ${U}_t^{n}:=(U_t^{k,n} : k \in \mathcal{K}^n)$ and all actions in the system by $U_t^{1:N} := (U_t^1, \dots, U_t^N)$.
Starting at $X_0$, the state of the system evolves as 
\begin{equation}
    X_{t+1} = f_t\left(X_t,U_t^{1:N},W_t\right), \quad t =0,\dots,T-1, \label{st_eq}
\end{equation}
where $W_t$ is the disturbance of the system at time $t$ which takes values in a finite set $\mathcal{W}_t$.
At each $t=0,\dots,T$, each agent $k \in \mathcal{K}^n$, $n \in \mathcal{N}$, has an observation $Y_t^{k,n} := h_t^{k,n}(X_t,V_t^{k,n})$, which takes values in a finite set $\mathcal{Y}^{k,n}_t$. Here, $V_t^{k,n}$ is an observation noise which takes values in a finite set $\mathcal{V}^{k,n}_t$. We denote all noises in subsystem $n \in \mathbb{N}$ collectively by $V_t^n := (V_t^{k,n}: k \in \mathcal{K}^n)$ with a feasible set $\mathcal{V}_t^n := \prod_{k \in \mathcal{K}^n} \mathcal{V}_t^{k,n}$.
The disturbances $\{W_t: t=0,\dots,T\}$, measurement noises $\big\{V_t^{k,n}: k \in \mathcal{K}^n, n \in \mathcal{N}, t=0,\dots,T\big\}$, and initial state $X_0$ are collectively called the \textit{primitive variables}. 
Each primitive variable is independent from all other primitive variables. This ensures that the system is Markovian in a non-stochastic sense \cite{nair2013nonstochastic, bertsekas1973sufficiently}. Next, we describe the information structure of the system.

\begin{definition}
The \textit{memory} of agent $k \in \mathcal{K}^n$, $n \in \mathcal{N}$, at each $t$, is a set $M_t^{k,n} \subseteq \{Y_{0:t-1}^{i,m}, U_{0:t-1}^{i,m}: i \in \mathcal{K}^m, m \geq n\}$, which takes values in a finite collection of sets $\mathcal{M}_t^{k,n}$ and satisfies \textit{perfect recall}, i.e., $M_t^{k,n} \subseteq M_{t+1}^{k,n}$ for all $t=0,\dots,T-1$.
\end{definition}

\begin{remark}
To be consistent with the exposition in the literature \cite{gagrani2017decentralized, nayyar2015structural}, we consider that for all $k \in \mathcal{K}^n$, $n \in \mathcal{N}$, and $t=0,\dots,T$, the memory $M_{t}^{k,n}$ is updated before the observation $Y_t^{k,n}$ is realized. 
\end{remark}

In an information structure with nested subsystems, we partition the memory $M_t^{k,n}$ of each agent $k \in \mathcal{K}^n$, $n \in \mathcal{N}$, into two components, $C_t^n$ and $L_t^{k,n}$, with the following properties:

\textit{1) The common information} of all agents in a subsystem $n \in \mathcal{N}$ at each $t = 0,\dots,T$ is the information available to all agents in $\mathcal{K}^n$. We define it as the set $C_t^n := \bigcap_{k \in \mathcal{K}^n} M_t^{k,n}$ which takes values in a finite collection of sets $\mathcal{C}_t^n$, and satisfies the following properties: (1) \textit{nestedness}, i.e., $C_t^m \subseteq C_t^n$ for all $m \in \mathcal{N}$ with $m > n$ and $t = 0,\dots,T$, and (2) \textit{perfect recall}, i.e., $C_{t}^n \subseteq C_{t+1}^n$ for all $t$. The property of nestedness implies that the common information of a subsystem is also available to all preceding subsystems.

\textit{2) The private information} of an agent $k \in \mathcal{K}^n$, $n \in \mathcal{N}$, at each $t = 0,\dots,T$ is the information available only to agent $k$. We define it as the set $L_t^{k,n} := M_t^{k,n} \setminus C_t^n$ which takes values in a finite collection of sets $\mathcal{L}_t^{k,n}$. For any pair of subsystems $n,m \in \mathcal{N}$ with $m > n$, we impose the condition that $L_t^{k,m} \cap C_t^n = \emptyset$ for all $k \in \mathcal{K}^m$, i.e., no element in the private information of an agent can be available to \textit{all} agents of a preceding subsystem. 



For each $n \in \mathcal{N}$, we also define the \textit{new information} added to $C_t^n$ at each $t=0,\dots,T$ as a set $Z_t^n := C_{t}^n \setminus C_{t-1}^n$, which takes values in a finite collection of sets $\mathcal{Z}_t^n$, where $C_{-1}^n := \emptyset$. The new information is also nested, i.e., $Z_t^m \subseteq Z_t^n$, for all $m, n \in \mathcal{N}$, $m > n$, and $t$. Thus, the new information available to any subsystem is a subset of the new information available to any preceding subsystem at each $t$. As an illustration, consider a system with three agents divided into two subsystems in Fig. \ref{fig:illustration}. Subsystem $1$ contains two agents and subsystem $2$ contains one agent. The red dashed arrows indicate the communication between the agents at time $t$. The one-directional, dashed arrow indicates sharing of common information from subsystem $2$ to all agents in subsystem $1$ at time $t$, which ensures that $C_t^2 \subseteq C_t^1$.

\begin{figure}[ht!]
  \centering
  \includegraphics[width=0.8\linewidth, keepaspectratio]{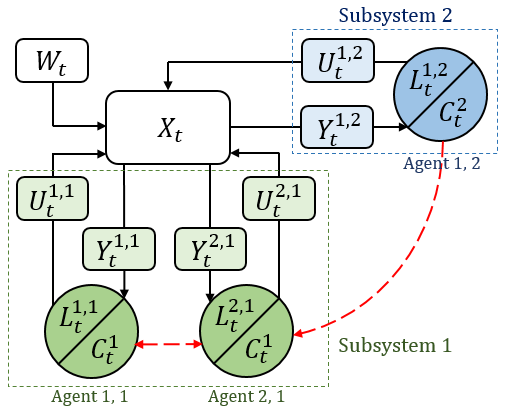}
  \caption{A system with three agents in two nested subsystems at time $t$}
  \label{fig:illustration}
\end{figure}

After updating their memory and realizing their observations at each $t=0,\dots,T$, each agent $k \in \mathcal{K}^n$, $n \in \mathcal{N}$, selects an action
\begin{gather} \label{U_def}
    U_t^{k,n} = g_t^{k,n}(Y_t^{k,n},M_t^{k,n}) = g_t^{k,n}(Y_t^{k,n},L_t^{k,n}, C_t^n),
\end{gather}
where $M_t^{k,n} = \{L_t^{k,n}, C_t^n\}$, and $g_t^{k,n}: \mathcal{Y}_t^{k,n} \times \mathcal{M}_t^{k,n} \to \mathcal{U}_t^{k,n}$ is the \textit{control law} of agent $k$ in subsystem $n$ at time $t$. The \textit{control strategy} of subsystem $n \in \mathcal{N}$ is $\boldsymbol{g}^{n} := (g_{0:T}^{k,n}: k \in \mathcal{K}^n)$ and the control strategy of the system is $\boldsymbol{g} := \big(\boldsymbol{g}^{1}, \dots,\boldsymbol{g}^{N}\big)$. The set of all feasible strategy profiles is $\mathcal{G}$. All agents collectively incur a shared terminal cost $d_T(X_T) \in \mathbb{R}_{\geq0}$ at the time horizon $T$.
The system performance is measured by the worst-case terminal cost
\begin{gather}
    \mathcal{J}(\boldsymbol{g}) := \max_{X_0, W_{0:T}, V_{0:T}^{1:N}} d_T(X_T).
\end{gather}


\begin{problem} \label{problem_1}
The optimization problem is $\min_{\boldsymbol{g} \in \mathcal{G}} \mathcal{J}(\boldsymbol{g}),$
given the feasible sets $\{\mathcal{X}_0, \mathcal{W}_{t}, \mathcal{V}_{t}^{n} : n \in \mathcal{N}, t = 0,\dots,T\}$, the cost function $d_T$, and the system dynamics $\{f_t, h_t^{k,n}: k \in \mathcal{K}^n, n \in \mathcal{N}, t=0,\dots,T\}$.
\end{problem}

Our aim is to develop a DP which can derive an optimal strategy profile $\boldsymbol{g}^* \in \mathcal{G}$ for Problem \ref{problem_1}. Note that an optimal strategy is guaranteed to exist because all variables take values in finite sets.

\begin{remark}
Variations of many existing information structures are special cases of our model. For example, a system with partial history sharing \cite{gagrani2017decentralized, 17}, emerges from our model when all agents belong to a single subsystem, i.e., we set $N = 1$. Similarly, a system with one-directional delayed communication among a chain of $K \in \mathbb{N}$ agents is the special case where we have one agent per subsystem, i.e., $N=K$ and $|\mathcal{K}^n| = 1$ for all $n \in \mathcal{N}$. Simpler cases of instantaneous one-directional communication among just two agents were considered in \cite{Dave2021a,xie2020optimally} for stochastic control.
\end{remark}






\section{Equivalent System with Nested Information} \label{section:equivalent}

In this section, we construct an equivalent system with a simpler information structure, where each subsystem acts as the decision maker. For example, consider two distinct agents $k,i \in \mathcal{K}^n$ in a subsystem $n \in \mathcal{N}$. In general, given a strategy $\boldsymbol{g}$, agent $k$ cannot generate the action $U_t^{i,n}$ of agent $i$ since they cannot access the private information $L_t^{i,n}$. However, all agents in subsystem $n \in \mathcal{N}$ can access the common information $C_t^n$. This motivates us to consider a two-stage mechanism for action generation: (1) each subsystem $n$ generates a \textit{partial action} for each agent in $\mathcal{K}^n$ using only the common information $C_t^n$, and then (2) each agent $k \in \mathcal{K}^n$ uses the partial action along with their private information $L_t^{k,n}$ to generate their action $U_t^k$.


\begin{definition}
A \textit{partial action} for agent $k \in \mathcal{K}^n$, $n \in \mathcal{N},$ at each $t =0,\dots,T$ is a mapping  $\hat{U}_t^{k,n}:\mathcal{Y}_t^{k,n} \times \mathcal{L}_t^{k,n} \to \mathcal{U}_t^{k,n}$ where $\hat{\mathcal{U}}_t^{k,n}$ is a finite set.
\end{definition}

At each $t=0,\dots,T$, a subsystem $n \in \mathcal{N}$ generates a partial action for an agent $k \in \mathcal{K}^n$ using a partial control law $\hat{g}_t^{k,n}: \mathcal{C}_t^n \to \hat{\mathcal{U}}_t^{k,n}$, which yields $\hat{U}_t^{k,n} = \hat{g}_t^{k,n}(C_t^n)$. We define the partial control law for subsystem $n$ at time $t$ as $\hat{g}_t^n := (g_t^{k,n} : k \in \mathcal{K}^n)$ and the partial control strategy for the system as $\hat{\boldsymbol{g}} := (\hat{g}_{0:T}^{1}, \dots, \hat{g}_{0:T}^N)$. To simplify the notation, we also define $\hat{U}_t^{n} := \big(\hat{U}_t^{k,n} : k \in \mathcal{K}^n\big)$ and compactly write $\hat{U}_t^n = \hat{g}_t^n(C_t^n)$ for each $t = 0,\dots,T$. The partial action $\hat{U}_t^n$ is generated using only the common information in subsystem $n$ and thus is available to all agents in $\mathcal{K}^n$. Furthermore, given a partial action $\hat{U}_t^n$, each agent $k \in \mathcal{K}^n$ must generate their action as $U_t^{k,n} = \hat{U}_t^{k,n}({Y}_t^{k,n},L_t^{k,n})$. Next, we construct a new state which is controlled only using the partial actions of all subsystems. For each $n \in \mathcal{N}$, let ${L}_t^n := \{Y_t^{k,n}, L_t^{k,n}: k \in \mathcal{K}^n\}$. Then, the new state is
\begin{gather}
    \hat{X}_t := \big\{X_t, {L}_t^1, \dots, {L}_t^N\big\}, \quad t = 0,\dots,T, \label{hat_X_def}
\end{gather}
and it takes values in a finite collection of sets $\hat{\mathcal{X}}_t$.


\begin{lemma} \label{eq_St}
For each $t = 0, \dots, T-1$, we can construct a state evolution function $\hat{f}_t(\cdot)$ and an observation rule $\hat{h}^{{n}}_t(\cdot)$, $n \in \mathcal{N}$, such that $\hat{X}_{t+1} = \hat{f}_{t}(\hat{X}_t, \hat{U}_t^{1:N}, W_{t}, V_{t+1}^{1:N})$ and $Z_{t+1}^n = \hat{h}_{t}^n( \hat{X}_t, \hat{U}_{t}^{1:N})$, respectively. In addition, we can construct a terminal cost function $\hat{d}_T(\cdot)$ which yields a shared cost $\hat{d}_T(\hat{X}_T) = d_T(X_T).$
\end{lemma}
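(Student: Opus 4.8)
The plan is to build the three objects by following the data flow one step of the dynamics at a time, and to verify at each stage that every quantity needed is either a component of $\hat{X}_t$ or can be reconstructed from $\hat{X}_t$ together with the partial actions $\hat{U}_t^{1:N}$. The guiding observation is that $\hat{X}_t$ already stores $X_t$, every current observation $Y_t^{k,n}$, and every agent's private information $L_t^{k,n}$, so the only thing it omits is the common information $C_t^n$; the crux of the argument is therefore to show that $C_t^n$ is never actually needed.

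First I would recover the true actions. Since each agent applies $U_t^{k,n} = \hat{U}_t^{k,n}(Y_t^{k,n}, L_t^{k,n})$ and both $Y_t^{k,n}$ and $L_t^{k,n}$ are components of $\hat{X}_t$, the vector $U_t^{1:N}$ is a deterministic function of $(\hat{X}_t, \hat{U}_t^{1:N})$. Feeding this into the original dynamics gives $X_{t+1} = f_t(X_t, U_t^{1:N}, W_t)$ as a function of $(\hat{X}_t, \hat{U}_t^{1:N}, W_t)$, and then each new observation $Y_{t+1}^{k,n} = h_{t+1}^{k,n}(X_{t+1}, V_{t+1}^{k,n})$ becomes a function of $(\hat{X}_t, \hat{U}_t^{1:N}, W_t, V_{t+1}^{1:N})$. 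This already handles $X_{t+1}$ and the observation part of each $L_{t+1}^n$.

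The main work, and the step I expect to be the obstacle, is the bookkeeping for the common and private information. For the observation rule I would argue that every variable constituting $Z_{t+1}^n = C_{t+1}^n \setminus C_t^n$ is, by the fixed sharing protocol, either a time-$t$ observation or action (hence in $\hat{X}_t$ or recoverable via the action step) or a previously private variable of some agent (hence stored in some $L_t^{i,m} \subseteq \hat{X}_t$); since no element of $C_t^n$ can be new at time $t+1$, none of the omitted common information is required, and I can define $\hat{h}_t^n$ so that $Z_{t+1}^n = \hat{h}_t^n(\hat{X}_t, \hat{U}_t^{1:N})$. For the private information I would use perfect recall together with the disjointness $L_t^{k,n} \cap C_t^n = \emptyset$ to write $M_{t+1}^{k,n} = M_t^{k,n} \cup A_t^{k,n}$, where $A_t^{k,n}$ is the newly acquired time-$t$ data, and then $L_{t+1}^{k,n} = M_{t+1}^{k,n}\setminus C_{t+1}^n = (L_t^{k,n}\setminus Z_{t+1}^n)\cup(A_t^{k,n}\setminus C_{t+1}^n)$. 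Every term on the right is a component of $\hat{X}_t$ or computable from $(\hat{X}_t,\hat{U}_t^{1:N})$ and the already-constructed $Z_{t+1}^n$, so $L_{t+1}^{k,n}$, and hence all of $L_{t+1}^n$ and $\hat{X}_{t+1}$, is a function of $(\hat{X}_t,\hat{U}_t^{1:N},W_t,V_{t+1}^{1:N})$, which defines $\hat{f}_t$.

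Finally, the terminal cost is immediate: since $X_T$ is a component of $\hat{X}_T$, I set $\hat{d}_T(\hat{X}_T) := d_T(X_T)$, giving $\hat{d}_T(\hat{X}_T) = d_T(X_T)$ identically. The delicate point throughout is justifying that only time-$t$ data and previously private data, all carried in $\hat{X}_t$, enter the updates, so that the absent common information $C_t^n$ never obstructs the construction; verifying this carefully for $Z_{t+1}^n$ under the nestedness and memory-inclusion constraints is where the real content of the lemma lies.
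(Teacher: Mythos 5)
Your proposal is correct and follows essentially the same route as the paper: reconstruct the true actions $U_t^{k,n}=\hat{U}_t^{k,n}(Y_t^{k,n},L_t^{k,n})$ from components of $\hat{X}_t$, propagate them through $f_t$ and $h_{t+1}^{k,n}$, and observe that each component of $\hat{X}_{t+1}$, of $Z_{t+1}^n$, and the terminal cost is thereby a function of the claimed arguments. Your explicit set-theoretic bookkeeping for $L_{t+1}^{k,n}$ and $Z_{t+1}^n$ only spells out what the paper compresses into the inclusion $L_{t+1}^{n} \subseteq \{L_t^m, Y_{t+1}^{k,m}, U_t^{k,m} : k \in \mathcal{K}^m, m \geq n\}$.
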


\begin{proof}
We first show how $\hat{f}_t(\cdot)$ can be constructed at each $t = 0,\dots,T$ by establishing that each component in $\hat{X}_{t+1} = \big\{X_{t+1}, L_{t+1}^1,\dots, L_{t+1}^N \big\}$ can be written in terms of the variables in the RHS. Note that $X_{t+1} = f_t(X_t, U_t^{1:N}, W_t)$, where $U_t^{k,n} = \hat{U}_t^{k,n}({Y}_t^{k,n},L_t^{k,n})$ for all $k \in \mathcal{K}^n$, $n\in \mathcal{N}$. For each $n \in \mathcal{N}$, $L_{t+1}^{n} \subseteq \{L_t^m, Y_{t+1}^{k,m}, {U}_t^{k,m} : k \in \mathcal{K}^m,$ $m \geq n\}$ and for each $k \in \mathcal{K}^n$, $Y_{t+1}^{k,n} = h_{t+1}^{k,n}(X_{t+1},V_{t+1}^{k,n})$. Using these equations, each term in $\hat{X}_{t+1}$ can be written using the variables in the RHS. The proof is complete by defining an appropriate function $\hat{f}_t(\cdot)$. The other functions can also be constructed in a similar manner.
\end{proof}

Lemma \ref{eq_St} yields a ``new" decentralized system with a shared state $\hat{X}_t$, action $\hat{U}_t^n$, and observation $Z_t^n$ for each subsystem $n \in \mathcal{N}$ and $t=0,\dots,T$. The cost is $\hat{d}_T(\hat{X}_T)$ and the performance criterion is $\hat{\mathcal{J}}(\hat{\boldsymbol{g}}) := \max_{X_0, W_{0:T}, V_{0:T}^{1:N}} \hat{d}_T(\hat{X}_T).$ This new decentralized system has a nested information structure \cite{Dave2021a} since at each $t$, a partial action $\hat{U}_t^n$ is generated using only $C_t^n = Z_{0:t}^n$, while $Z_t^m \subseteq Z_t^n$ for all $n,m \in \mathcal{N}$ with $m > n$. Next, we define a new optimization problem and show that it is equivalent to Problem \ref{problem_1}.

\begin{problem} \label{problem_2}
The new problem is $\min_{\hat{\boldsymbol{g}} \in \hat{\mathcal{G}}} \hat{\mathcal{J}}(\hat{\boldsymbol{g}}),$
given the sets $\{{\mathcal{X}}_0, {\mathcal{W}}_{t}, {\mathcal{V}}_{t}^{n} : n \in \mathcal{N}, t = 0,\dots,T\}$, the cost function $\hat{d}_T$, and the dynamics $\{\hat{f}_t, \hat{h}_t^{n} : k \in \mathcal{K}^n, n \in \mathcal{N}, t=0,\dots,T\}$.
\end{problem}


\begin{lemma} \label{lem_hatg_g_relation} 
For any given control strategy $\boldsymbol{g}$, consider a partial control strategy $\hat{\boldsymbol{g}}$
\begin{gather}
    \hat{g}_t^{k,n}(C_t^n)(\cdot,\cdot) := g_t^{k,n}(\cdot, \cdot, C_t^{n}), \; t = 0,\dots,T, \label{u_presc}
\end{gather}
for all $k \in \mathcal{K}^n$, $n \in \mathcal{N}$. Then, $\mathcal{J}(\boldsymbol{g}) = \hat{\mathcal{J}}(\hat{\boldsymbol{g}})$.
Moreover, for any given partial control strategy $\hat{\boldsymbol{g}}$, we can construct a control strategy $\boldsymbol{g}$ such that
\begin{gather}
     g_t^{k,n}(\cdot,\cdot,C_t^{n}) := \hat{g}_t^{k,n}(C_t^n)(\cdot,\cdot), \; t = 0,\dots,T, \label{u_presc_inv}
\end{gather}
for all $k \in \mathcal{K}^n$, $n \in \mathcal{N}$. Then, $\hat{\mathcal{J}}(\hat{\boldsymbol{g}}) = \mathcal{J}(\boldsymbol{g})$.
\end{lemma}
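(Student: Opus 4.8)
The plan is to show that, under either correspondence, the two systems produce identical trajectories of states, actions, and common information for every fixed realization of the primitive variables $(X_0, W_{0:T}, V_{0:T}^{1:N})$; the equality of the worst-case costs then follows immediately by taking the maximum over all such realizations.

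First I would fix an arbitrary realization of the primitive variables and argue by induction on $t$ that the original trajectory $\big(X_t, \{L_t^n\}_{n \in \mathcal{N}}, \{C_t^n\}_{n \in \mathcal{N}}, U_t^{1:N}\big)$ generated by $\boldsymbol{g}$ coincides with the new trajectory $\big(\hat{X}_t, \{C_t^n\}_{n \in \mathcal{N}}, \hat{U}_t^{1:N}\big)$ generated by $\hat{\boldsymbol{g}}$. The base case $t=0$ holds because $X_0$ is a shared primitive variable, so that $\hat{X}_0 = \{X_0, L_0^1, \dots, L_0^N\}$ and each $C_0^n$ are identical in both descriptions. For the inductive step, suppose $\hat{X}_t$ and each $C_t^n$ agree at time $t$. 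Then the partial actions $\hat{U}_t^n = \hat{g}_t^n(C_t^n)$ agree, and by the construction in \eqref{u_presc}, for each agent $k \in \mathcal{K}^n$ we have
\[
U_t^{k,n} = \hat{U}_t^{k,n}(Y_t^{k,n}, L_t^{k,n}) = g_t^{k,n}(Y_t^{k,n}, L_t^{k,n}, C_t^n),
\]
so the realized actions coincide. Invoking Lemma \ref{eq_St}, the next state $\hat{X}_{t+1} = \hat{f}_t(\hat{X}_t, \hat{U}_t^{1:N}, W_t, V_{t+1}^{1:N})$ and the new information $Z_{t+1}^n = \hat{h}_t^n(\hat{X}_t, \hat{U}_t^{1:N})$ are then the same in both systems; since $C_{t+1}^n = C_t^n \cup Z_{t+1}^n$, the common information also agrees at $t+1$, which closes the induction.

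Having matched the full trajectories, in particular $X_T$ (as embedded in $\hat{X}_T$) agrees, so Lemma \ref{eq_St} gives $\hat{d}_T(\hat{X}_T) = d_T(X_T)$ for every realization. Taking the maximum over all realizations of the primitive variables yields $\mathcal{J}(\boldsymbol{g}) = \hat{\mathcal{J}}(\hat{\boldsymbol{g}})$, which proves the first claim. The converse follows by the identical argument: starting from any $\hat{\boldsymbol{g}}$ and defining $\boldsymbol{g}$ through \eqref{u_presc_inv} again produces matched trajectories for every realization, and hence $\hat{\mathcal{J}}(\hat{\boldsymbol{g}}) = \mathcal{J}(\boldsymbol{g})$.

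The step I expect to be the main obstacle is the careful bookkeeping in the inductive hypothesis rather than any single calculation: one must ensure that the common information $C_t^n$ on which each partial control law is evaluated is genuinely reconstructable within the new system from $\hat{X}_t$ and the past partial actions, and not dependent on private information unavailable at the subsystem level. This is precisely what the observation rule $Z_{t+1}^n = \hat{h}_t^n(\hat{X}_t, \hat{U}_t^{1:N})$ of Lemma \ref{eq_St} guarantees, so the subtlety lies in stating the induction hypothesis strongly enough — tracking $\hat{X}_t$ together with all $C_t^n$ simultaneously — for the recursion to close cleanly.
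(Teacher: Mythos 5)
Your proposal is correct and follows essentially the same route as the paper's proof: matching actions agentwise given identical information, concluding that the two strategies generate identical trajectories for every fixed realization of the primitive variables, and then taking the maximum over realizations. You simply make explicit the induction on $t$ that the paper leaves implicit in the phrase ``subsequently, the two strategies yield the same terminal state,'' which is a reasonable elaboration rather than a different argument.
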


\begin{proof}
For the first part, given a control strategy $\boldsymbol{g}$ and the partial control strategy $\hat{\boldsymbol{g}}$, the action at each $t$ for any agent $k \in \mathcal{K}^n$, $n \in \mathcal{N}$, is $U_t^{k,n} = g_t^{k,n}({Y}_t^{k,n},L_t^{k,n}, C_t^n) = \hat{g}_t^{k,n}(C_t^n)({Y}_t^{k,n},L_t^{k,n})$. Thus, the control law and partial control law result in the same action $U_t^{k,n}$ for any given $\{{Y}_t^{k,n},L_t^{k,n}, C_t^n\}$. Subsequently, for any given realizations of all primitive variables $\{X_0, W_{0:T}, V_{0:T}^{1:3}\}$, the two strategies yield the same terminal state $X_T$ and cost $d_T(X_T)$.
The proof of the second part follows using arguments similar to the first part.
\end{proof}

Lemma \ref{lem_hatg_g_relation} implies that given an optimal partial strategy $\hat{\boldsymbol{g}}^*$ for Problem \ref{problem_2}, a control strategy $\boldsymbol{g}^*$ constructed by \eqref{u_presc_inv} is an optimal solution to Problem \ref{problem_1}, with performance $\mathcal{J}(\boldsymbol{g}^*)$. Similarly, an optimal strategy for Problem \ref{problem_1} yields an optimal solution to Problem \ref{problem_2}. Thus, the two problems are equivalent.


\begin{remark}
It is easier to analyze the equivalent system with a nested information structure than the original system. In the next section, we use this new information structure to derive results for optimal partial strategies, and then use Lemma \ref{lem_hatg_g_relation} to characterize the optimal control strategies.
\end{remark}

\section{Main Results} \label{section:structural}

In this section, we derive a structural form for optimal partial strategies in Problem \ref{problem_2}. To simplify the exposition, we carry out the detailed derivation for a system with only $3$ subsystems. This illustrates all the key arguments required to prove the results for different cases. Later, in subsection IV-D, we present the results for $N$ subsystems.

\subsection{Analysis for Subsystem 1} \label{pbp_agent_1}

In this subsection, we analyze the optimal partial strategy of subsystem $1$ in a system with $3$ subsystems, i.e., $\mathcal{N} = \{1,2,3\}$. We use the person-by-person approach and thus, arbitrarily fix the partial control strategies $\hat{\boldsymbol{g}}^2$ and $\hat{\boldsymbol{g}}^3$, such that
$\hat{U}_t^n = \hat{g}_t^n(C_t^n)$, for all $n =2,3$ and $t = 0,\dots,T$, and set up a centralized problem for subsystem $1$. Recall that $C_t^n \subseteq C_t^1$, and thus, any agent in subsystem $1$ can derive the partial action $\hat{U}_t^n$ for all $n = 2,3$ and $t=0,\dots,T$. Next, we construct a centralized problem for subsystem $1$ with a new state $S_t^1 := \big\{\hat{X}_t, C_t^2\big\}$ for each $t=0,\dots,T$, which takes values in a finite collection of sets $\mathcal{S}_t^1$.

\begin{lemma} \label{lem_pbp_suff}
For any given partial strategies $\hat{\boldsymbol{g}}^2$ and $\hat{\boldsymbol{g}}^3$, we can construct a state evolution function $\bar{f}^{{1}}_t(\cdot)$ and an observation rule $\bar{h}^{{1}}_t(\cdot)$, such that ${S}^{{1}}_{t+1} = \bar{f}^{{1}}_{t}({S}^{{1}}_t, \hat{U}_t^{{1}}, W_{t}, V_{t+1}^{1:3})$, and $Z^1_{t+1} = \bar{h}^1_t({S}^{{1}}_t, \hat{U}_t^{{1}})$, respectively, for all $t = 0,\dots,T-1$. In addition, we can construct a terminal cost function $\bar{d}^{{1}}_T(\cdot)$, which yields a shared cost
	$\bar{d}^{{1}}_T({S}^{{1}}_T) = \hat{d}_T(\hat{X}_T).$
\end{lemma}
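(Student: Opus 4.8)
The plan is to adapt the construction of Lemma~\ref{eq_St} to the person-by-person setting, exploiting the fact that once $\hat{\boldsymbol{g}}^2$ and $\hat{\boldsymbol{g}}^3$ are fixed, the partial actions of subsystems $2$ and $3$ are no longer free decision variables but deterministic functions of the common information. Concretely, since $\hat{U}_t^2 = \hat{g}_t^2(C_t^2)$ and $\hat{U}_t^3 = \hat{g}_t^3(C_t^3)$, and nestedness gives $C_t^3 \subseteq C_t^2$, both $\hat{U}_t^2$ and $\hat{U}_t^3$ can be written as functions of $C_t^2$ alone. Because $C_t^2$ is a component of the augmented state $S_t^1 = \{\hat{X}_t, C_t^2\}$, this is precisely what lets us ``close'' the dynamics in terms of $S_t^1$, $\hat{U}_t^1$, and the primitive variables, without carrying the other subsystems' actions as exogenous inputs. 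I would state this substitution as the first step, since it is the conceptual crux that distinguishes this lemma from Lemma~\ref{eq_St}.

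Next I would construct $\bar{f}_t^1$ by writing each component of $S_{t+1}^1 = \{\hat{X}_{t+1}, C_{t+1}^2\}$ as a function of $S_t^1$, $\hat{U}_t^1$, $W_t$, and $V_{t+1}^{1:3}$. For the first component, Lemma~\ref{eq_St} already gives $\hat{X}_{t+1} = \hat{f}_t(\hat{X}_t, \hat{U}_t^{1:3}, W_t, V_{t+1}^{1:3})$; substituting $\hat{U}_t^2 = \hat{g}_t^2(C_t^2)$ and $\hat{U}_t^3 = \hat{g}_t^3(C_t^3)$, with $C_t^3$ recovered from $C_t^2$ by nestedness, turns the right-hand side into a function of $\hat{X}_t$, $C_t^2$, $\hat{U}_t^1$, $W_t$, $V_{t+1}^{1:3}$, i.e.\ of $S_t^1$, $\hat{U}_t^1$, and the noises. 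For the second component I would use $C_{t+1}^2 = C_t^2 \cup Z_{t+1}^2$; since $C_t^2$ is part of $S_t^1$ and Lemma~\ref{eq_St} gives $Z_{t+1}^2 = \hat{h}_t^2(\hat{X}_t, \hat{U}_t^{1:3})$, the same substitution expresses $Z_{t+1}^2$, and hence $C_{t+1}^2$, as a function of $S_t^1$ and $\hat{U}_t^1$. Assembling both components defines $\bar{f}_t^1(\cdot)$.

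The observation rule is handled identically: $Z_{t+1}^1 = \hat{h}_t^1(\hat{X}_t, \hat{U}_t^{1:3})$ from Lemma~\ref{eq_St} becomes a function of $S_t^1$ and $\hat{U}_t^1$ after substituting the fixed partial actions, which defines $\bar{h}_t^1(\cdot)$; note the noises $W_t, V_{t+1}^{1:3}$ correctly do not appear here. The terminal cost is immediate: since $\hat{X}_T$ is a component of $S_T^1$, setting $\bar{d}_T^1(S_T^1) := \hat{d}_T(\hat{X}_T)$, which projects away $C_T^2$, yields $\bar{d}_T^1(S_T^1) = \hat{d}_T(\hat{X}_T)$. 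I do not expect any single step to be technically hard, since Lemma~\ref{eq_St} supplies the underlying functional dependencies; the real content, and the only place an error could creep in, is verifying that nestedness genuinely makes $C_t^2$ a \emph{sufficient} summary for reconstructing $\hat{U}_t^3$ as well as $\hat{U}_t^2$. Without $C_t^3 \subseteq C_t^2$ the augmented state would have to include $C_t^3$ separately, and the economy of using $S_t^1 = \{\hat{X}_t, C_t^2\}$ would collapse, so I would make the role of nestedness explicit rather than leave it implicit.
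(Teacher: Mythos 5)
Your proposal is correct and follows essentially the same route as the paper's own proof: substitute the fixed laws $\hat{g}_t^2(C_t^2)$ and $\hat{g}_t^3(C_t^3)$ (using $C_t^3 \subseteq C_t^2$) into the dynamics and observation maps from Lemma~\ref{eq_St}, express $C_{t+1}^2 = C_t^2 \cup Z_{t+1}^2$, and read off $\bar{f}_t^1$, $\bar{h}_t^1$, and $\bar{d}_T^1$. Your added emphasis on why nestedness makes $C_t^2$ a sufficient summary for $\hat{U}_t^3$ is a welcome clarification but not a different argument.
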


\begin{proof}
We first show how $\bar{f}_t^1(\cdot)$ can be constructed at each $t=0,\dots,T$ by establishing that each component in $S_{t+1}^1 = \big\{\hat{X}_{t+1}, C_{t+1}^2 \big\}$ can be written in terms of the variables in the RHS. Note that $\hat{X}_{t+1} = \hat{f}_t(\hat{X}_t,\hat{U}_t^1, \hat{g}_t^2(C_t^2), \hat{g}_t^3(C_t^3),$ $W_t, V_{t+1}^{1:3})$, where $\hat{g}_t^2(\cdot)$ and $\hat{g}_t^3(\cdot)$ are given and $C_t^3 \subseteq C_t^2$. Furthermore, $C_{t+1}^2 =C_t^2 \cup Z_{t+1}^2$, where $Z_{t+1}^2 = \hat{h}_t^2(\hat{X}_t, \hat{U}_t^1, \hat{g}_t^2(C_t^2), \hat{g}_t^3(C_t^3))$. Thus, given $\hat{\boldsymbol{g}}^{2}$ and $\hat{\boldsymbol{g}}^{3}$, each term in $S_{t+1}^1$ can be written as a function of the variables in the argument of the RHS. The proof is complete by defining an appropriate function $\bar{f}_t^1(\cdot)$. The other functions can be constructed similarly by rewriting the LHS as a function of the variables in the argument in the RHS.
\end{proof}

Given the partial control strategies $\hat{\boldsymbol{g}}^{2}$ and $\hat{\boldsymbol{g}}^{3}$, Lemma \ref{lem_pbp_suff} yields a new \textit{centralized} system with state $S_t^1$, action $\hat{U}_t^1$, and observation $Z_{t}^1$ for all $t=0,\dots,T$. The terminal cost incurred by the system is $\bar{d}_T^1(S_T^1)$ and the worst-case performance criterion is $\mathcal{J}^1(\hat{\boldsymbol{g}}^{1}) := \max_{X_0, W_{0:T}, V_{0:T}^{1:3}} \bar{d}_T^1(S_T^1).$


\begin{problem} \label{problem_3}
The problem for subsystem $1$ is
    $\min_{\hat{\boldsymbol{g}}^{1}} \mathcal{J}^1(\hat{\boldsymbol{g}}^{1})$,
given the control partial strategies $\hat{\boldsymbol{g}}^{2}$ and $\hat{\boldsymbol{g}}^{3}$, the feasible sets $\{\mathcal{X}_0,\mathcal{W}_{t},\mathcal{V}_{t}^{n} : n = 1,2,3, \; t = 0,\dots,T\}$, the cost function $\bar{d}^1_T$, and the dynamics $\{\bar{f}^1_t,\bar{h}_t^{1} :t=0,\dots,T\}$.
\end{problem}

\begin{remark} \label{remark_5}
Using Lemma \ref{lem_pbp_suff}, we construct a cost $\bar{d}_T^1(S_T^1)$ for the centralized problem such that $\bar{d}_T^1(S_T^1) = \hat{d}_T(\hat{X}_T)$ for all realizations of the primitive variables $\{X_0, W_{0:T}, V_{0:T}^{1:3}\}$. Thus, the performance $\mathcal{J}^1(\hat{\boldsymbol{g}}^1)$ for any given $\hat{\boldsymbol{g}}^2$ and $\hat{\boldsymbol{g}}^3$ is equal to $\mathcal{J}(\hat{\boldsymbol{g}}^{1}, \hat{\boldsymbol{g}}^{2},\hat{\boldsymbol{g}}^{3})$.
\end{remark}

Note that Problem \ref{problem_3} is a partially observed centralized problem with subsystem $1$ as the sole decision maker. Specifically, at each $t=0,\dots,T$, the component $C_t^2$ of the state ${S}_t^1$ is completely observed by subsystem $1$, whereas, the component $\hat{X}_t$ must be estimated using the common information $C_t^1$ and history of partial actions $\hat{U}_{0:t-1}^{1:3}$. For partially observed centralized problems, e.g., Problem \ref{problem_3}, a DP was provided in \cite{bertsekas1973sufficiently} and its complexity was reduced using a \textit{sufficiently informative function}. We simply call this an \textit{information state} to be consistent with its stochastic counterpart. In minimax problems, an information state at time $t$ is a set of feasible values for the partially observed state which are compatible with the information available to a subsystem at time $t$. Since subsystem $1$ completely observes $C_t^2$, we need to define an information state only for the component $\hat{X}_t$. 
For all $t=0,\dots,T$, given the realizations $c_t^1$ and $\hat{u}_{0:t-1}^{1:3}$ of $C_t^1$ and $\hat{U}_{0:t-1}^{1:3}$, respectively,
the realized information state of subsystem $1$ is the set
\begin{multline} \label{pi_1_def}
    P_t^1 
    := \big\{\hat{x}_t \in \hat{\mathcal{X}}_t ~|~ \exists \; \big( x_0 \in \mathcal{X}_0, w_{0:t-1} \in \prod_{\ell=0}^{t-1} \mathcal{W}_{\ell},
    v_{0:t}^{n} \in \\ \prod_{\ell=0}^{t} \mathcal{V}_{\ell}^{n}, \; n = 1,2,3\big)  
    \text{ such that } s_{\ell+1}^1 = \bar{f}_\ell^1\big(s_{\ell}^1,\hat{u}_\ell^1,  w_\ell, v_{\ell+1}^{1:3}\big),\\ 
    z_{\ell+1}^1 = \bar{h}_\ell^1\big(s_{\ell}^1, \hat{u}_\ell^1\big), \; \ell = 0, \dots, t-1
    \big\}.
\end{multline}
The information state each $t$ is a function of uncertain variables and thus, in general, we denote it with the set-valued uncertain variable $\Pi_t^1$ which takes values in a finite collection of feasible sets $\mathcal{P}^1_t \subseteq 2^{\hat{\mathcal{X}}_t}$, where $2^{\hat{\mathcal{X}}_t}$ denotes the power set of $\hat{\mathcal{X}}_t$. Next, we show that the information state $\Pi_t^1$ does not depend on the choice of partial strategy $\hat{\boldsymbol{g}}$.

\begin{lemma} \label{pi_1_evol}
At each $t = 0,\dots,T-1$, there exists a function $\tilde{f}_t^{{1}}(\cdot)$ independent of $\hat{\boldsymbol{g}}$, such that
\begin{equation} \label{pi_1_relation}
    \Pi_{t+1}^{{1}} = \tilde{f}_{t}^{{1}}(\Pi_t^{{1}},\hat{U}_t^{1:3},Z_{t+1}^{{1}}).
\end{equation}
\end{lemma}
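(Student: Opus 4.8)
The plan is to exhibit $\tilde{f}_t^1$ explicitly as a set-valued predict-and-correct recursion, verify the identity \eqref{pi_1_relation} by double inclusion, and then argue that the resulting map contains no strategy-dependent object. The starting point is to rewrite the one-step dynamics appearing in \eqref{pi_1_def} using the strategy-\emph{independent} functions $\hat{f}_t$ and $\hat{h}_t^1$ supplied by Lemma \ref{eq_St}, which act only on $(\hat{X}_t, \hat{U}_t^{1:3})$ together with the one-step primitives for $\hat{f}_t$. Concretely, for a realized information state $P_t^1$, realized partial actions $\hat{u}_t^{1:3}$, and new observation $z_{t+1}^1$, I would set
\begin{multline*}
\tilde{f}_t^1(P_t^1, \hat{u}_t^{1:3}, z_{t+1}^1) := \big\{ \hat{x}_{t+1} \in \hat{\mathcal{X}}_{t+1} ~\big|~ \exists\, \hat{x}_t \in P_t^1,\, w_t \in \mathcal{W}_t,\\ v_{t+1}^{1:3} \in \textstyle\prod_{n} \mathcal{V}_{t+1}^n \text{ s.t. } \hat{x}_{t+1} = \hat{f}_t(\hat{x}_t, \hat{u}_t^{1:3}, w_t, v_{t+1}^{1:3})\\ \text{ and } z_{t+1}^1 = \hat{h}_t^1(\hat{x}_t, \hat{u}_t^{1:3}) \big\}.
\end{multline*}
The correction step (the observation constraint) prunes $P_t^1$ to those $\hat{x}_t$ compatible with the freshly received $z_{t+1}^1$, noting that $\hat{h}_t^1$ filters $\hat{x}_t$ rather than $\hat{x}_{t+1}$, and the prediction step sweeps the admissible one-step primitives forward through $\hat{f}_t$.

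Next I would prove $P_{t+1}^1 = \tilde{f}_t^1(P_t^1, \hat{u}_t^{1:3}, z_{t+1}^1)$ by two inclusions, exploiting that membership in $P_{t+1}^1$ is witnessed by a length-$(t{+}1)$ primitive-variable trajectory. For $(\subseteq)$, any witness for $\hat{x}_{t+1} \in P_{t+1}^1$ truncates to a length-$t$ witness for its $\hat{x}_t$, placing $\hat{x}_t \in P_t^1$, while its final step furnishes the $(w_t, v_{t+1}^{1:3})$ satisfying both constraints. For $(\supseteq)$, a witness for $\hat{x}_t \in P_t^1$ concatenated with the one-step primitives certifying membership in the right-hand set produces a full witness for $\hat{x}_{t+1} \in P_{t+1}^1$. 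Both directions are routine once the dynamics are expressed through $\hat{f}_t, \hat{h}_t^1$; passing to the uncertain-variable description then yields \eqref{pi_1_relation}.

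The crux — and the only nontrivial point — is justifying that this recursion may legitimately be written with $\hat{f}_t, \hat{h}_t^1$ rather than the $\bar{f}_t^1, \bar{h}_t^1$ of \eqref{pi_1_def}, and hence is independent of $\hat{\boldsymbol{g}}^2, \hat{\boldsymbol{g}}^3$. The functions $\bar{f}_t^1, \bar{h}_t^1$ built in Lemma \ref{lem_pbp_suff} internally evaluate $\hat{g}_t^2(c_t^2)$ and $\hat{g}_t^3(c_t^3)$, so they carry the fixed strategies, and \eqref{pi_1_def} threads the observed component $c_\ell^2$ of $s_\ell^1$ through its constraints. I would remove this dependence by observing that along \emph{every} feasible trajectory in \eqref{pi_1_def} the internally generated common information coincides with the observed one: by nestedness $Z_\ell^2 \subseteq Z_\ell^1$, so matching the observed $z_{\ell+1}^1$ forces matching $z_{\ell+1}^2$, whence $c_\ell^2 = z_{0:\ell}^2$ equals its realized value for all $\ell \le t$ by induction. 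Consequently the internally computed $\hat{g}_\ell^2(c_\ell^2), \hat{g}_\ell^3(c_\ell^3)$ equal the realized partial actions $\hat{u}_\ell^{2:3}$, and substituting these as explicit arguments replaces $\bar{f}_t^1, \bar{h}_t^1$ by $\hat{f}_t, \hat{h}_t^1$ without altering the feasible set. Since $\tilde{f}_t^1$ then refers only to $\hat{f}_t, \hat{h}_t^1$, the fixed feasible sets $\mathcal{W}_t, \mathcal{V}_{t+1}^{1:3}$, and its explicit arguments, it is manifestly independent of $\hat{\boldsymbol{g}}$. I expect this strategy-independence argument, rather than the inclusions, to be the main obstacle, since it hinges on the nestedness of the new information and on the observability of the downstream subsystems' partial actions to subsystem $1$.
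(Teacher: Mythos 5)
Your proposal is correct and follows essentially the same route as the paper, which proves Lemma \ref{pi_1_evol} by pointing to the argument for Lemma \ref{pi_2_evol}: prune the current information state by consistency with the new observation $z_{t+1}^1$ via $\hat{h}_t^1$, then propagate the survivors forward through $\hat{f}_t$ over all feasible one-step primitives, with strategy-independence following because the partial actions $\hat{u}_t^{1:3}$ enter as explicit arguments rather than being regenerated from $\hat{\boldsymbol{g}}^2,\hat{\boldsymbol{g}}^3$. Your explicit double-inclusion check and the nestedness argument pinning down $c_\ell^2$ along feasible trajectories supply details the paper leaves implicit, but they do not change the approach.
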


\begin{proof}
The proof follows the same arguments as the ones of Lemma \ref{pi_2_evol} in Section IV-B.
\end{proof}

Lemma \ref{pi_1_evol} establishes the Markovian and strategy independent evolution of $\Pi_t^1$ at each $t$. Thus, we state the following result for Problem \ref{problem_3} using the centralized DP in \cite{bertsekas1973sufficiently}. 

\begin{theorem} \label{pbp_struct_result}
    For any given partial strategies $\hat{\boldsymbol{g}}^{2}$ and $\hat{\boldsymbol{g}}^{3}$ of subsystems $2$ and $3$, respectively, in Problem \ref{problem_3}, without loss of optimality, we can restrict attention to partial strategies $\hat{\boldsymbol{g}}^{*1}$ with the structural form
    \begin{gather} \label{eq_pbp_struct_result}
        \hat{U}_t^{1} = \hat{g}_t^{*1}\big(\Pi_t^1, C_t^2\big), \quad t = 0,\dots,T.
    \end{gather}
\end{theorem}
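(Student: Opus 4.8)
The plan is to recognize Problem \ref{problem_3} as a standard partially observed centralized minimax problem with subsystem $1$ as the sole decision maker, and then to invoke the dynamic programming result of \cite{bertsekas1973sufficiently} once $(\Pi_t^1, C_t^2)$ is verified to be a sufficiently informative statistic. The state $S_t^1 = \{\hat{X}_t, C_t^2\}$ splits into the fully observed component $C_t^2$ (accessible to subsystem $1$ since $C_t^2 \subseteq C_t^1$) and the partially observed component $\hat{X}_t$, for which $\Pi_t^1$ collects exactly the feasible values compatible with the information $\{C_t^1, \hat{U}_{0:t-1}^{1:3}\}$ available to subsystem $1$. I would carry out a backward induction on a worst-case cost-to-go, and the crux is to show that this cost-to-go at each $t$ can be written as a function of $(\Pi_t^1, C_t^2)$ alone.

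First I would define the value function at the terminal time. Since Lemma \ref{lem_pbp_suff} gives $\bar{d}_T^1(S_T^1) = \hat{d}_T(\hat{X}_T)$, the worst-case terminal cost consistent with the information is $V_T(\Pi_T^1, C_T^2) := \max_{\hat{x}_T \in \Pi_T^1} \hat{d}_T(\hat{x}_T)$, which depends only on $(\Pi_T^1, C_T^2)$. For the inductive step, I would assume $V_{t+1}$ is a function of $(\Pi_{t+1}^1, C_{t+1}^2)$ and use two facts: (i) the fixed strategies give $\hat{U}_t^2 = \hat{g}_t^2(C_t^2)$ and $\hat{U}_t^3 = \hat{g}_t^3(C_t^3)$ with $C_t^3 \subseteq C_t^2$, so $\hat{U}_t^{2:3}$ is a known function of $C_t^2$; and (ii) the strategy-independent, Markovian evolution $\Pi_{t+1}^1 = \tilde{f}_t^1(\Pi_t^1, \hat{U}_t^{1:3}, Z_{t+1}^1)$ of Lemma \ref{pi_1_evol}, together with the update $C_{t+1}^2 = C_t^2 \cup Z_{t+1}^2$.

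The recursion would then read
\[
V_t(\Pi_t^1, C_t^2) = \min_{\hat{u}_t^1} \; \max_{(z_{t+1}^1, z_{t+1}^2)} V_{t+1}\big(\tilde{f}_t^1(\Pi_t^1, \hat{u}_t^1, \hat{U}_t^{2:3}, z_{t+1}^1), \; C_t^2 \cup z_{t+1}^2\big),
\]
where the inner maximization ranges over observation pairs $(z_{t+1}^1, z_{t+1}^2)$ that are feasible given $\Pi_t^1$, the chosen action $\hat{u}_t^1$, and $C_t^2$. Because every quantity inside is a function of $(\Pi_t^1, C_t^2, \hat{u}_t^1)$ and the feasible noises, $V_t$ depends only on $(\Pi_t^1, C_t^2)$, which closes the induction; the minimizer is then attained by an action that is a function of $(\Pi_t^1, C_t^2)$, giving exactly the structural form \eqref{eq_pbp_struct_result}.

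I expect the main obstacle to be justifying that the inner maximization set---the collection of feasible next observations $(Z_{t+1}^1, Z_{t+1}^2)$---is determined entirely by $(\Pi_t^1, C_t^2, \hat{u}_t^1)$ rather than by the full history $\{C_t^1, \hat{U}_{0:t-1}^{1:3}\}$. This is precisely the \emph{sufficiently informative} property: $\Pi_t^1$ must encode all feasible realizations of $\hat{X}_t$ so that no additional past data can admit or exclude a next observation. Establishing it would reduce to checking that the set of $(\hat{X}_t, Z_{t+1}^1, Z_{t+1}^2)$ compatible with the information coincides with the set generated from $\Pi_t^1$ through the dynamics $\bar{f}_t^1, \bar{h}_t^1$ and the observation maps, which follows from the definition \eqref{pi_1_def} of $\Pi_t^1$ and from Lemma \ref{pi_1_evol}.
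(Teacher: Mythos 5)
Your proposal is correct and follows essentially the same route as the paper: the paper's proof of Theorem~\ref{pbp_struct_result} simply cites the standard centralized minimax DP of \cite{bertsekas1973sufficiently}, relying on Lemma~\ref{lem_pbp_suff} for the centralized reformulation and Lemma~\ref{pi_1_evol} for the strategy-independent Markovian update of $\Pi_t^1$, which is exactly the backward induction you spell out. The only cosmetic difference is that your inner maximization over pairs $(z_{t+1}^1, z_{t+1}^2)$ collapses, since nestedness gives $Z_{t+1}^2 \subseteq Z_{t+1}^1$ so the realization of $Z_{t+1}^2$ is determined by that of $Z_{t+1}^1$.
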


\begin{proof}
This result follows from standard arguments for centralized minimax control problems in \cite{bertsekas1973sufficiently}.
\end{proof}

Theorem \ref{pbp_struct_result} establishes a structural form for an optimal partial strategy $\hat{\boldsymbol{g}}^{*1}$ of subsystem $1$ in Problem $3$, which holds for all $\hat{\boldsymbol{g}}^2$ and $\hat{\boldsymbol{g}}^3$. From Remark \ref{remark_5}, we note that any globally optimal partial strategy $(\hat{\boldsymbol{g}}^{*1}, \hat{\boldsymbol{g}}^{*2}, \hat{\boldsymbol{g}}^{*3})$ for Problem \ref{problem_2}, must necessarily be a person-by-person optimal solution of Problem \ref{problem_3}, i.e., after fixing $\hat{\boldsymbol{g}}^{*2}$ and $\hat{\boldsymbol{g}}^{*3}$ for subsystems $2$ and $3$, respectively, the strategy $\hat{\boldsymbol{g}}^{*1}$ is optimal for Problem \ref{problem_3}. 
Thus, there exists a globally optimal partial strategy $(\hat{\boldsymbol{g}}^{*1}, \hat{\boldsymbol{g}}^{*2}, \hat{\boldsymbol{g}}^{*3})$ where $\hat{\boldsymbol{g}}^{*1}$ takes the structural form in \eqref{eq_pbp_struct_result}. 

\subsection{Analysis for Subsystem 2}

In this subsection, we restrict attention to partial control strategies $\hat{\boldsymbol{g}}^1$ of agent $1$ which satisfy \eqref{eq_pbp_struct_result}, and derive a structural form for the partial strategy of subsystem $2$. Note that given $\hat{\boldsymbol{g}}^1$, agents in subsystem $2$ cannot generate $\hat{U}_t^1$ at any $t$ because they cannot access $\Pi_t^1$. Thus, before using the person-by-person approach for subsystem $2$, we consider that $\hat{U}_t^1$ is generated in two stages for all $t=0,\dots,T$: (1) subsystem $2$ generates a \textit{prescription} for subsystem $1$ using only $C_t^2$, and then (2) subsystem $1$ uses the prescription and its information state $\Pi_t^1$ to generate its partial action $\hat{U}_t^1$.

\begin{definition}
A \textit{prescription} by subsystem $2$ for subsystem $1$ at any $t=0,\dots,T$ is a mapping $\Gamma_t^{[2,1]}:\mathcal{P}^{1}_t \to \mathcal{U}_t^1$ which takes values in a finite set $\mathcal{F}_t^{[2,1]}$.
\end{definition}

At each $t=0,\dots,T$, subsystem $2$ generates a prescription for subsystem $1$ using a prescription law $\psi_t^{[2,1]}: \mathcal{C}_t^2 \to \mathcal{F}_t^{[2,1]}$, which yields $\Gamma_t^{[2,1]} = \psi_t^{[2,1]}(C_t^2)$. We define the prescription strategy for subsystem $2$ as $\boldsymbol{\psi}^2 := (\psi_t^{[2,1]} : t=0,\dots,T)$. Since the prescription $\Gamma_t^{[2,1]}$ is generated only using $C_t^2 \subseteq C_t^1$, it is available to both subsystems $1$ and $2$. Then, subsystem $1$ must generate its partial action as $\hat{U}_t^1 = \Gamma_t^{[2,1]}\big(\Pi_t^1\big)$. Note that in this formulation, the prescription $\Gamma_t^{[2,1]}$ is generated by subsystem $2$ and subsystem $1$ simply utilizes the given prescription to obtain $\hat{U}_t^1$. Thus, 
we can consider that at each $t=0,\dots,T$, subsystem $2$ selects a \textit{complete action} $\Theta_t^2 := (\Gamma_t^{[2,1]}, U_t^2)$. This motivates us to simultaneously derive a structural form for the optimal prescription and partial strategies of subsystem $2$. To this end, we use the person-by-person approach by fixing the partial control strategy $\hat{\boldsymbol{g}}^3$ of subsystem $3$ and noting that subsystem $2$ can derive $\hat{U}_t^3$ for all $t$ because $C_t^3 \subseteq C_t^2$.
Next, we construct a centralized problem for subsystem $2$ with a state 
$S_t^2 := \{\hat{X}_t, \Pi_t^1, C_t^{3}\}$ at each $t$,
which takes values in a finite collection of sets ${\mathcal{S}}_t^2$. Using the same arguments as Lemma \ref{lem_pbp_suff}, we can construct a state evolution function $\bar{f}^2_t(\cdot)$ such that ${S}^{{2}}_{t+1} = \bar{f}^2_{t}({S}^{{2}}_t, \Theta_t^2, W_{t}, V_{t+1}^{1:3})$, and an observation rule $\bar{h}^{{2}}_t(\cdot)$ which yields $Z^2_{t+1} = \bar{h}^2_t({S}^{{2}}_t, \Theta_t^2)$ to obtain a centralized problem with state $S_t^2$, observation $Z_t^2$ and complete action $\Theta_t^2 = (\Gamma_t^{[2,1]}, U_t^2)$ for all $t$. Furthermore, we can construct a cost function $\bar{d}^{{2}}_T(\cdot)$ which yields the terminal cost $\bar{d}^{{2}}_T({S}^{{2}}_T) = \hat{d}_T(\hat{X}_T)$ and a corresponding performance criterion $\mathcal{J}^2(\boldsymbol{\psi}^2, \hat{\boldsymbol{g}}^2) := \max_{X_0, W_{0:T}, V_{0:T}^{1:3}} \bar{d}_T^2(S_T^2).$

\begin{problem} \label{problem_4}
The problem for subsystem $2$ is $\inf_{\boldsymbol{\psi}^2, \hat{\boldsymbol{g}}^2} \mathcal{J}^2(\boldsymbol{\psi}^2, \hat{\boldsymbol{g}}^2)$, given a partial control strategy $\hat{\boldsymbol{g}}^{3}$, the feasible sets $\{\mathcal{X}_0,\mathcal{W}_{t},\mathcal{V}_{t}^{n} : n = 1,2,3, t = 0,\dots,T\}$, the cost function $\bar{d}^2_T$, and the dynamics $\{\bar{f}^2_t,\bar{h}_t^{2}:t=0,\dots,T\}$.
\end{problem}

\begin{remark} \label{remark_4}
Consider a fixed partial control strategy $\hat{\boldsymbol{g}}^3$ for agent $3$. Using the same sequence of arguments as Lemma \ref{lem_hatg_g_relation}, for each prescription strategy $\boldsymbol{\psi}^2$, we can construct a partial control strategy $\hat{\boldsymbol{g}}^1$ such that the performance criterion $\mathcal{J}(\hat{\boldsymbol{g}}^1, \hat{\boldsymbol{g}}^2, \hat{\boldsymbol{g}}^3)$ for Problem \ref{problem_2} is equal to $\mathcal{J}^2(\boldsymbol{\psi}^2, \hat{\boldsymbol{g}}^2)$. Similarly, for each partial control strategy $\hat{\boldsymbol{g}}^1$ we can construct a prescription strategy $\boldsymbol{\psi}^2$ which yields the same performance. Thus, it is equivalent to select either a partial control strategy $\hat{\boldsymbol{g}}^1$ or a prescription strategy $\boldsymbol{\psi}^2$.
\end{remark}


Problem \ref{problem_4} is a partially observed centralized problem with subsystem $2$ as the sole decision maker. Specifically, at each $t=0,\dots,T$, the component $C_t^3$ in state $S_t^2$ is perfectly observed by subsystem $2$, whereas $\hat{X}_t$ and $\Pi_t^1$ must be estimated using an information state like \eqref{pi_1_def}. 
Given the realizations $c_t^2$, $\gamma_{0:t-1}^{[2,1]}$, and $\hat{u}_{0:t-1}^{2:3}$ of $C_t^2$, $\Gamma_{0:t-1}^{[2,1]}$, and $\hat{U}_{0:t-1}^{2:3}$, respectively, the realized information state of subsystem $2$ is
\begin{multline} \label{pi_2_def}
    P_t^2 
    := \big\{\hat{x}_t \in \hat{\mathcal{X}}_t, P_t^1 \in \mathcal{P}_t^1 ~|~ \exists \; \big(x_0 \in \mathcal{X}_0,
    w_{0:t-1} \in \\ \prod_{\ell=0}^{t-1} \mathcal{W}_{\ell}, v_{0:t}^{n} \in \prod_{\ell=0}^{t} \mathcal{V}_{\ell}^{n}, \; n = 1,2,3\big) \text{ s.t. } s_{\ell+1}^2 = \bar{f}_\ell^2\big(s_{\ell}^2,\\ \theta_\ell^2, w_\ell, v_{\ell+1}^{1:3}\big),
    z_{\ell+1}^2 = \bar{h}_\ell^2\big(s_{\ell}^2,\theta_\ell^2\big),  \ell = 0, \dots, t-1
    \big\},
\end{multline}
Thus, the information state of subsystem $2$ is a set-valued uncertain variable $\Pi_t^2$ which takes values in a finite collection of feasible sets $\mathcal{P}^2_t \subseteq 2^{\hat{\mathcal{X}}_t} \times 2^{\mathcal{P}^1_t}$. Next, we show that the information state $\Pi_t^2$ is independent of the choice of $\hat{\boldsymbol{g}}$.

\begin{lemma} \label{pi_2_evol}
At each $t = 0,\dots,T-1$, there exists a function $\tilde{f}_t^2(\cdot)$ independent from $\hat{\boldsymbol{g}}$, such that
\begin{equation} \label{pi_2_relation}
    \Pi_{t+1}^2 = \tilde{f}_{t}^2(\Pi_t^2,\Gamma_t^{[2,1]},\hat{U}_t^{2:3},Z_{t+1}^2).
\end{equation}
\end{lemma}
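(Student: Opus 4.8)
The plan is to establish \eqref{pi_2_relation} as a set-valued filtering recursion, exactly mirroring the information-state update for a partially observed centralized minimax problem \cite{bertsekas1973sufficiently}, but carried out on the hidden component $(\hat{X}_t, \Pi_t^1)$ of the state $S_t^2$ (the component $C_t^3$ is perfectly observed by subsystem $2$ and needs no estimation). Since $\Pi_t^1$ is itself hidden from subsystem $2$, the recursion must propagate both $\hat{X}_t$ and the set-valued $\Pi_t^1$ simultaneously. First I would fix the realized arguments $\Gamma_t^{[2,1]}$, $\hat{U}_t^{2:3}$, and $Z_{t+1}^2$ and define a candidate set $Q_{t+1}$ by a two-step \emph{prediction--correction} construction: for every pair $(\hat{x}_t, P_t^1) \in \Pi_t^2$ and every feasible noise $(w_t, v_{t+1}^{1:3}) \in \mathcal{W}_t \times \prod_{n=1,2,3}\mathcal{V}_{t+1}^n$, reconstruct the hidden subsystem-$1$ action $\hat{u}_t^1 = \Gamma_t^{[2,1]}(P_t^1)$, set $\hat{u}_t^{1:3} := (\hat{u}_t^1, \hat{U}_t^{2:3})$, propagate $\hat{x}_{t+1} = \hat{f}_t(\hat{x}_t, \hat{u}_t^{1:3}, w_t, v_{t+1}^{1:3})$ and $P_{t+1}^1 = \tilde{f}_t^1(P_t^1, \hat{u}_t^{1:3}, z_{t+1}^1)$ with $z_{t+1}^1 = \hat{h}_t^1(\hat{x}_t, \hat{u}_t^{1:3})$, and retain $(\hat{x}_{t+1}, P_{t+1}^1)$ only if the induced observation $\hat{h}_t^2(\hat{x}_t, \hat{u}_t^{1:3})$ equals the realized $Z_{t+1}^2$. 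Declaring $\tilde{f}_t^2$ to be the map $(\Pi_t^2, \Gamma_t^{[2,1]}, \hat{U}_t^{2:3}, Z_{t+1}^2) \mapsto Q_{t+1}$ then gives the desired form.

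The core of the argument is the set equality $\Pi_{t+1}^2 = Q_{t+1}$, proved by two inclusions against the definition \eqref{pi_2_def}. For $\Pi_{t+1}^2 \subseteq Q_{t+1}$, I would take any feasible $(\hat{x}_{t+1}, P_{t+1}^1)$ at time $t+1$, restrict its witnessing primitives to the horizon $0,\dots,t-1$ to obtain a predecessor $(\hat{x}_t, P_t^1) \in \Pi_t^2$, and observe that the final-step primitives $(w_t, v_{t+1}^{1:3})$ reproduce $(\hat{x}_{t+1}, P_{t+1}^1)$ through exactly the prediction step above while forcing $\hat{h}_t^2(\hat{x}_t, \hat{u}_t^{1:3}) = Z_{t+1}^2$. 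For the reverse inclusion, I would take $(\hat{x}_{t+1}, P_{t+1}^1) \in Q_{t+1}$, expand its predecessor's membership in $\Pi_t^2$ into witnessing primitives up to time $t-1$, and \emph{append} the fresh final-step primitives $(w_t, v_{t+1}^{1:3})$; primitive independence (the nonstochastic Markov property) guarantees that this concatenation remains jointly feasible, and by Lemma \ref{pi_1_evol} the appended step produces precisely the subsystem-$1$ information-state value $P_{t+1}^1$, so $(\hat{x}_{t+1}, P_{t+1}^1)$ satisfies \eqref{pi_2_def} at time $t+1$. Strategy independence of $\tilde{f}_t^2$ is then immediate: the construction references only the supplied realizations $\Gamma_t^{[2,1]}$, $\hat{U}_t^{2:3}$, $Z_{t+1}^2$ and the fixed maps $\hat{f}_t$, $\hat{h}_t^{1:3}$, $\tilde{f}_t^1$, none of which depend on $\hat{\boldsymbol{g}}$.

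I expect the main obstacle to be the nested, self-referential role of $\Pi_t^1$: unlike a standard minimax filter, the hidden state here contains a set-valued object whose dynamics ($\tilde{f}_t^1$) and whose influence on subsystem $1$'s action ($\hat{u}_t^1 = \Gamma_t^{[2,1]}(P_t^1)$) must both be tracked consistently inside the recursion for $\Pi_t^2$. The delicate points are (i) checking that reconstructing $\hat{u}_t^1$ from the \emph{known} prescription $\Gamma_t^{[2,1]}$ and the \emph{candidate} $P_t^1$ makes $\bar{f}_t^2$ and $\bar{h}_t^2$ genuine functions of the hidden state plus the supplied arguments, so that the generic filtering recursion applies, and (ii) handling the unobserved subsystem-$1$ innovation $Z_{t+1}^1 = \hat{h}_t^1(\hat{x}_t, \hat{u}_t^{1:3})$ as an internally computed quantity (needed to advance $\Pi_t^1$) rather than an external input, while still matching only the observed $Z_{t+1}^2$ in the correction step. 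Once these are verified, the recursion and its strategy independence follow as above, and the identical argument specializes to Lemma \ref{pi_1_evol} by dropping the $\Pi^1$ coordinate.
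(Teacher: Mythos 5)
Your proposal is correct and follows essentially the same prediction--correction construction as the paper's proof: the paper first forms the interim set $Q_t^2$ of elements of $P_t^2$ consistent with the realized $z_{t+1}^2$ (reconstructing $\hat{u}_t^1 = \gamma_t^{[2,1]}(P_t^1)$ and computing $z_{t+1}^1$ internally via $\hat{h}_t^1$), then propagates through $\hat{f}_t$ and $\tilde{f}_t^1$ over all feasible noises, exactly as you do. Your explicit two-inclusion verification that the constructed set equals the one defined in \eqref{pi_2_def} is a welcome elaboration the paper leaves implicit, but it does not change the route.
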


\begin{proof}
Let $P_t^2 \in \mathcal{P}^2_t$ be a given set-valued realization of the information state $\Pi_t^2$ at time $t$. Then, for the realizations $\gamma_t^{[2,1]}$, $\hat{u}_t^{2:3}$ and $z_{t+1}^2$ of $\Gamma_t^{[2,1]}$, $\hat{U}_t^{2:3}$ and $Z_{t+1}^2$, subsystem $2$ can eliminate some possible elements in $P_t^2$ at the start of time $t+1$. Specifically, we define an interim set  
\begin{multline}
    Q_t^2 := \big\{(\hat{x}_t, P_t^1) \in P_t^2 ~|~ z_{t+1}^2 = \hat{h}_t^2(\hat{x}_t^2, \hat{u}_t^{1:3}), \\
     \hat{u}_t^1 = \gamma_t^{[2,1]}(\pi_t^1)\big\},
\end{multline}
which is completely determined given the set $P_t^2$ and the realizations $\{\gamma_t^{[2,1]}, \hat{u}_t^{2:3}, z_{t+1}^2\}$.
Next, we derive the realization $P_{t+1}^2$ of the information state $\Pi_{t+1}^2$ using set $Q_t^2$ as
\begin{multline}
    P_{t+1}^2= \big\{\hat{x}_{t+1} \in \hat{\mathcal{X}}_{t+1}, P_{t+1}^1 \in \mathcal{P}_{t+1}^1 ~|~ \hat{x}_{t+1} = \hat{f}_t\big(\hat{x}_{t}, \\
    \hat{u}_t^{1:3}, w_t, v_{t+1}^{1:3}\big), \; P_{t+1}^1 = \tilde{f}_t^1(P_t^1, \hat{u}_t^{1:3}, z_{t+1}^1), z_{t+1}^1 = \hat{h}_t^1(\hat{x}_t, \\
    \hat{u}_t^{1:3}), \; \hat{u}_t^1 = \gamma_t^{[2,1]}(P_t^1) \text{ for all } [(\hat{x}_t, P_t^1) \in Q_t^2, \\
     w_t \in \mathcal{W}_t, v_{t+1}^{n} \in \mathcal{V}_{t+1}^{n}, n = 1,2,3 ] \big\}. \label{tildef_t_2_def}
\end{multline}
Thus, we can define an appropriate function $\tilde{f}_t^2(\cdot)$ using \eqref{tildef_t_2_def} such that that $P_{t+1}^2  = \tilde{f}_t^2(P_t^2, \gamma_t^{[2,1]}, \hat{u}_t^{2:3}, z_{t+1}^2)$.
\end{proof}

Lemma \ref{pi_2_evol} establishes that the evolution of $\Pi_t^2$ is Markovian. Thus, we can state the following result for Problem \ref{problem_4} using the standard centralized DP in \cite{bertsekas1973sufficiently}.

\begin{theorem} \label{pbp_struct_result_2}
    For a given partial strategy $\hat{\boldsymbol{g}}^{3}$ of subsystem $3$ in Problem \ref{problem_4}, without loss of optimality, we can restrict attention to prescription strategies $\boldsymbol{\psi}^{*2}$ and partial strategies $\hat{\boldsymbol{g}}^{*2}$ with the structural form
    \begin{gather} \label{eq_pbp_struct_result_2}
        \Gamma_t^{[2,1]} = \psi_t^{*[2,1]}\big(\Pi_t^2, C_t^3\big), \\
        \hat{U}_t^{2} = \hat{g}_t^{*2}\big(\Pi_t^2, C_t^3\big), \quad t = 0,\dots,T. \label{eq_thm_2_2}
    \end{gather}
\end{theorem}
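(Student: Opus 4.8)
The plan is to treat Problem \ref{problem_4} as exactly the centralized, partially observed minimax problem it is, with subsystem $2$ as the sole decision maker, and then invoke the sufficient-statistic dynamic program of \cite{bertsekas1973sufficiently} in the same manner as in Theorem \ref{pbp_struct_result}. The only structural differences from the subsystem-$1$ analysis are that the hidden part of the state now contains the pair $(\hat{X}_t, \Pi_t^1)$ rather than $\hat{X}_t$ alone, and that the decision variable is the complete action $\Theta_t^2 = (\Gamma_t^{[2,1]}, \hat{U}_t^2)$, which lives in the finite product set $\mathcal{F}_t^{[2,1]} \times \hat{\mathcal{U}}_t^2$. Because all of these sets are finite, the reduction to a finite-horizon minimax DP is unaffected by the prescription being a mapping.

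First I would separate the state $S_t^2 = \{\hat{X}_t, \Pi_t^1, C_t^3\}$ into its perfectly observed component $C_t^3$ and its hidden component $(\hat{X}_t, \Pi_t^1)$, the latter being hidden precisely because subsystem $2$ cannot access $\Pi_t^1$. The component $C_t^3$ is available to subsystem $2$ since $C_t^3 \subseteq C_t^2$ by nestedness, and its update $C_{t+1}^3 = C_t^3 \cup Z_{t+1}^3$ is a deterministic function of $C_t^3$ and the observation $Z_{t+1}^2$, because $Z_{t+1}^3 \subseteq Z_{t+1}^2$; hence $C_t^3$ behaves as a fully observed, Markovian state. For the hidden part, the realized information state $\Pi_t^2$ of \eqref{pi_2_def} is exactly the set of feasible values of $(\hat{X}_t, \Pi_t^1)$ compatible with the data $\{C_t^2, \Gamma_{0:t-1}^{[2,1]}, \hat{U}_{0:t-1}^{2:3}, Z_{0:t}^2\}$ available to subsystem $2$, and by Lemma \ref{pi_2_evol} it evolves Markovianly and independently of $\hat{\boldsymbol{g}}$ under the complete action $\Theta_t^2$ and the next observation $Z_{t+1}^2$.

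Next I would argue that $(\Pi_t^2, C_t^3)$ is a sufficient statistic for the worst-case cost-to-go. Since the terminal cost $\bar{d}_T^2(S_T^2) = \hat{d}_T(\hat{X}_T)$ depends on the hidden state only through the feasible set recorded in $\Pi_T^2$, the worst-case cost attainable from any time $t$ onward can, following \cite{bertsekas1973sufficiently}, be written as a value function of $(\Pi_t^2, C_t^3)$ alone. The induction runs backward from $T$: maximizing over the current feasible set $\Pi_t^2$ and over the realizations of $\{W_t, V_{t+1}^{1:3}\}$, composed with the Markovian update of $(\Pi_t^2, C_t^3)$ supplied by Lemma \ref{pi_2_evol} and the nestedness update of $C_t^3$, shows that at each $t$ an optimal complete action $\Theta_t^2$ can be chosen as a function of $(\Pi_t^2, C_t^3)$ without loss of optimality. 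Decomposing this map into its two coordinates yields the prescription law $\psi_t^{*[2,1]}(\Pi_t^2, C_t^3)$ of \eqref{eq_pbp_struct_result_2} and the partial control law $\hat{g}_t^{*2}(\Pi_t^2, C_t^3)$ of \eqref{eq_thm_2_2}, completing the argument.

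The main obstacle I anticipate is justifying that the set-valued object $\Pi_t^1$ may legitimately sit inside the hidden state and still be summarized by the finite information state $\Pi_t^2 \in 2^{\hat{\mathcal{X}}_t} \times 2^{\mathcal{P}_t^1}$. This is where Lemma \ref{pi_2_evol} carries the load: one must check that the interim set $Q_t^2$ and the update \eqref{tildef_t_2_def} correctly propagate uncertainty over $\hat{X}_t$ and $\Pi_t^1$ simultaneously, exploiting the fact that $\hat{U}_t^1 = \Gamma_t^{[2,1]}(\Pi_t^1)$ is pinned down once the prescription and the hidden $\Pi_t^1$ are fixed. Once this Markovian, strategy-independent evolution of $\Pi_t^2$ is in hand, the remaining steps are the routine finite-horizon minimax DP of \cite{bertsekas1973sufficiently}, exactly as invoked for subsystem $1$ in Theorem \ref{pbp_struct_result}.
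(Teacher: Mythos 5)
Your proposal is correct and follows essentially the same route as the paper, which proves this theorem by simply invoking the standard centralized minimax DP of \cite{bertsekas1973sufficiently} after noting that Problem \ref{problem_4} is a partially observed centralized problem whose information state evolution is given by Lemma \ref{pi_2_evol}. You have merely written out the details that the paper compresses into a one-line citation.
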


\begin{proof}
This result follows from standard arguments for centralized minimax control problems \cite{bertsekas1973sufficiently}.
\end{proof}

Using an optimal prescription strategy which satisfies \eqref{eq_pbp_struct_result_2}, we construct an optimal partial strategy $\hat{\boldsymbol{g}}^{*1}$ for Problem \ref{problem_2} such as $\hat{g}_t^{*1}(\cdot, \Pi_t^2, C_t^3) := \psi_t^{*[2,1]}(\Pi_t^2, C_t^3)(\cdot)$ and recall from Remark \ref{remark_4} that they yield the same performance. Thus, we can derive an optimal partial strategy $\hat{\boldsymbol{g}}^{*1}$ in the form
\begin{align}
    \hat{U}_t^1 &= \hat{g}_t^{*1}(\Pi_t^{1:2},C_t^3), \quad t =0,\dots,T. \label{eq_thm_2_1}
\end{align}

\subsection{Analysis for Subsystem $3$}

In this subsection, we restrict attention to partial strategies $\hat{\boldsymbol{g}}^1$ and $\hat{\boldsymbol{g}}^2$ of the form in \eqref{eq_thm_2_1} and \eqref{eq_thm_2_2}, respectively, and derive a structural form for the partial strategy of subsystem $3$. Given $\hat{\boldsymbol{g}}^1$ and $\hat{\boldsymbol{g}}^2$, subsystem $3$ cannot generate the control actions $\hat{U}_t^{1:2}$ for any $t = 0,\dots,T$, because it cannot access the information states $\Pi_t^1$ and $\Pi_t^2$.
Instead, subsystem $3$ generates a prescription for both subsystems $1$ and $2$ in a manner similar to Section III-B. Specifically, for each $t$, a prescription of subsystem $3$ for subsystem $1$ is a mapping $\Gamma_t^{[3,1]}: \mathcal{P}_t^1 \times \mathcal{P}_t^2 \to \hat{\mathcal{U}}_t^{1}$ which takes values in a finite set $\mathcal{F}_t^{[3,1]}$ and that for subsystem $2$ is a mapping $\Gamma_t^{[3,2]}: \mathcal{P}_t^2 \to \hat{\mathcal{U}}_t^{2}$ which takes values in a finite set $\mathcal{F}_t^{[3,2]}$. At each $t=0,\dots,T$, subsystem $3$ generates a prescription for each $n =1,2$ using a prescription law $\psi_t^{[3,n]}: \mathcal{C}_t^3 \to \mathcal{F}_t^{[3,n]}$, which yields $\Gamma_t^{[3,n]} = \psi_t^{[3,n]}(C_t^3)$. We define the prescription strategy of subsystem $3$ as $\boldsymbol{\psi}^3 := (\psi_t^{[3,n]} : n = 1,2, \; t=0,\dots,T)$. In this formulation, after receiving the prescriptions of subsystem $3$, subsystem $1$ generates its partial action as $\hat{U}_t^1 = \Gamma_t^{[3,1]}(\Pi_t^{1:2})$ and subsystem $2$ generates its partial action as $\hat{U}_t^2 = \Gamma_t^{[3,2]}(\Pi_t^2)$. Thus, subsystem $3$ is the sole decision maker with a complete action $\Theta_t^3 := (\Gamma_t^{[3,1]},\Gamma_t^{[3,2]},\hat{U}_t^3)$. Next, we construct a new state for subsystem $3$ as $S_t^3 := \{\hat{X}_t, \Pi_t^1, \Pi_t^2\}$ for all $t$, which takes values in a finite collection of sets $\mathcal{S}_t^3$. Using the same arguments as Lemma \ref{lem_pbp_suff}, we can construct a state evolution function $\bar{f}^3_t(\cdot)$ such that ${S}^3_{t+1} = \bar{f}^3_{t}({S}^3_t, \Theta_t^3, W_{t}, V_{t+1}^{1:3})$, and an observation rule $\bar{h}^{{3}}_t(\cdot)$ which yields $Z^3_{t+1} = \bar{h}^3_t({S}^3_t, \Theta_t^3)$ to obtain a centralized problem with state $S_t^3$, observation $Z_t^3$ and complete action $\Theta_t^3$ for all $t$. Furthermore, we can construct a cost function $\bar{d}_T^3(\cdot)$ which yields a terminal cost $\bar{d}_T^3(S_T^3) := \hat{d}_T(\hat{X}_T)$ and a performance criterion $\mathcal{J}^3(\boldsymbol{\psi}^3, \hat{\boldsymbol{g}}^3) := \max_{X_0, W_{0:T}, V_{0:T}^{1:3}} \bar{d}_T^3(S_T^3)$.

\begin{problem} \label{problem_5}
The problem for subsystem $3$ is $\inf_{\boldsymbol{\psi}^3, \hat{\boldsymbol{g}}^3} \mathcal{J}^3(\boldsymbol{\psi}^3, \hat{\boldsymbol{g}}^3)$, given the feasible sets $\big\{\mathcal{X}_0,\mathcal{W}_{t},\mathcal{V}_{t}^{n} : n = 1,2,3, \; t = 0,\dots,T\big\}$, the cost function $\bar{d}^3_T$ and the dynamics $\big\{\bar{f}^3_t,\bar{h}_t^3: t=0,\dots,T\big\}$.
\end{problem}

Using the same arguments as in Lemma \ref{lem_hatg_g_relation} and Remark \ref{remark_4}, we conclude that we can generate partial actions $\hat{U}_t^n$, for $n = 1,2$ and $t =0,\dots,T$, equivalently using either a prescription
strategy $\boldsymbol{\psi}^3$ or an appropriate partial control strategy $\hat{\boldsymbol{g}}^n$. In Problem \ref{problem_5}, at each $t$, subsystem $3$ must estimate the entire state $S_t^3$. Thus, for the realizations $c_t^3$ and $\theta_{0:t-1}^{3}$ of $C_t^3$ and $\Theta_{0:t-1}^{3}$, respectively, the realized information state for subsystem $3$ is
\begin{multline} \label{pi_3_def}
    P_t^3 
    := \big\{\hat{x}_t \in \hat{\mathcal{X}}_t, 
    P_t^1 \in \mathcal{P}_t^1, P_t^2 \in \mathcal{P}_t^2 ~
    |~ \exists \; \big(x_0 \in \mathcal{X}_0, w_{0:t-1} \\ \in \prod_{\ell=0}^{t-1} \mathcal{W}_{\ell},
    v_{0:t}^{n} \in \prod_{\ell=0}^{t} \mathcal{V}_{\ell}^{n}, 
    n = 1,2,3 \big) \text{ s.t. } s_{\ell+1}^3 = \bar{f}_\ell^3 \big(s_{\ell}^3, \theta_\ell^3,\\  w_\ell, v_{\ell+1}^{1:3}\big), z_{\ell+1}^3 = \bar{h}_\ell^3\big(
    s_{\ell}^3, \theta_\ell^3,\big), \ell = 0, \dots, t-1
    \big\}.
\end{multline}
We define the information state of subsystem $3$ as a set-valued uncertain variable which takes values in a finite collection of feasible sets $\mathcal{P}^3_t \subseteq 2^{\hat{\mathcal{X}}_t} \times 2^{\mathcal{P}^1_t} \times 2^{\mathcal{P}^2_t}$. Next, we show that $\Pi_t^2$ is independent of $\hat{\boldsymbol{g}}$.

\begin{lemma} \label{pi_3_evol}
At each $t = 0,\dots,T-1$, there exists a function $\tilde{f}_t^3(\cdot)$ independent from $\hat{\boldsymbol{g}}$, such that
\begin{equation} \label{pi_3_relation}
    \Pi_{t+1}^3 = \tilde{f}_{t}^3(\Pi_t^3,\Gamma_t^{[3,1]},\Gamma_t^{[3,2]},\hat{U}_t^{3},Z_{t+1}^3).
\end{equation}
\end{lemma}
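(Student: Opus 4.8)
The plan is to lift the proof of Lemma~\ref{pi_2_evol} one level up the hierarchy, exploiting that subsystem~$3$ can reconstruct the entire partial action vector $\hat{u}_t^{1:3}$ for each candidate element of its information state. First I would fix a set-valued realization $P_t^3 \in \mathcal{P}_t^3$ together with the realizations $\gamma_t^{[3,1]}$, $\gamma_t^{[3,2]}$, $\hat{u}_t^3$, and $z_{t+1}^3$ of $\Gamma_t^{[3,1]}$, $\Gamma_t^{[3,2]}$, $\hat{U}_t^3$, and $Z_{t+1}^3$. The central observation is that for any triple $(\hat{x}_t, P_t^1, P_t^2) \in P_t^3$, the prescriptions fix the partial actions of subsystems~$1$ and~$2$ through $\hat{u}_t^1 = \gamma_t^{[3,1]}(P_t^1, P_t^2)$ and $\hat{u}_t^2 = \gamma_t^{[3,2]}(P_t^2)$, so that together with the known $\hat{u}_t^3$, the complete vector $\hat{u}_t^{1:3}$ is determined purely from quantities accessible to subsystem~$3$.

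Second, mirroring the interim set $Q_t^2$, I would form $Q_t^3 \subseteq P_t^3$ by retaining only those triples consistent with the observed $z_{t+1}^3 = \hat{h}_t^3(\hat{x}_t, \hat{u}_t^{1:3})$ and the prescription relations above; this set is completely determined by $P_t^3$ and the fixed realizations. Third, I would propagate each surviving triple forward exactly as in~\eqref{tildef_t_2_def}, using only strategy-independent building blocks already established: the state map $\hat{x}_{t+1} = \hat{f}_t(\hat{x}_t, \hat{u}_t^{1:3}, w_t, v_{t+1}^{1:3})$ ranging over all $w_t \in \mathcal{W}_t$ and $v_{t+1}^{n} \in \mathcal{V}_{t+1}^n$, the observation maps $z_{t+1}^n = \hat{h}_t^n(\hat{x}_t, \hat{u}_t^{1:3})$ for $n = 1,2$, the recursion $P_{t+1}^1 = \tilde{f}_t^1(P_t^1, \hat{u}_t^{1:3}, z_{t+1}^1)$ of Lemma~\ref{pi_1_evol}, and the recursion $P_{t+1}^2 = \tilde{f}_t^2(P_t^2, \gamma_t^{[2,1]}, \hat{u}_t^{2:3}, z_{t+1}^2)$ of Lemma~\ref{pi_2_evol}. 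Collecting all resulting triples $(\hat{x}_{t+1}, P_{t+1}^1, P_{t+1}^2)$ gives $P_{t+1}^3$, and defining $\tilde{f}_t^3(\cdot)$ as the induced map closes the argument; its independence of $\hat{\boldsymbol{g}}$ is inherited because every constituent map $\hat{f}_t$, $\hat{h}_t^n$, $\tilde{f}_t^1$, and $\tilde{f}_t^2$ was already shown to be strategy-independent.

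The hard part will be supplying the argument $\gamma_t^{[2,1]}$ demanded by $\tilde{f}_t^2$, since in subsystem~$3$'s problem subsystem~$1$ no longer acts under a prescription of type $\mathcal{P}_t^1 \to \hat{\mathcal{U}}_t^1$ but under $\Gamma_t^{[3,1]}: \mathcal{P}_t^1 \times \mathcal{P}_t^2 \to \hat{\mathcal{U}}_t^1$. The resolution is that, once the second coordinate is frozen at the value $P_t^2$ attached to the triple being propagated, the partial map $\gamma_t^{[3,1]}(\cdot, P_t^2)$ is exactly a prescription of the required type $\Gamma_t^{[2,1]}$, and it is this induced prescription that I would feed into $\tilde{f}_t^2$. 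I would then check that this restriction is well defined element-by-element and that the same $P_t^2$ consistently indexes both the first argument of $\tilde{f}_t^2$ and the frozen coordinate of $\gamma_t^{[3,1]}$, which guarantees that the recursion for $\Pi_t^2$ is faithfully reproduced inside the recursion for $\Pi_t^3$.
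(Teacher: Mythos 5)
Your proposal is correct and follows essentially the same route as the paper, which states that the proof of Lemma~\ref{pi_3_evol} is identical to that of Lemma~\ref{pi_2_evol} and omits it: you form the interim set $Q_t^3$ by pruning triples inconsistent with $z_{t+1}^3$ and the prescriptions, then propagate forward via the strategy-independent maps $\hat{f}_t$, $\hat{h}_t^n$, $\tilde{f}_t^1$, and $\tilde{f}_t^2$. Your handling of the type mismatch --- currying $\gamma_t^{[3,1]}(\cdot, P_t^2)$ into a prescription of the form required by $\tilde{f}_t^2$ --- is a detail the paper leaves implicit but is exactly consistent with how it converts prescription strategies into partial strategies elsewhere (e.g., $\hat{g}_t^{*1}(\cdot,\Pi_t^2,C_t^3) := \psi_t^{*[2,1]}(\Pi_t^2,C_t^3)(\cdot)$).
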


\begin{proof}
The proof is the same as the proof of Lemma \ref{pi_2_evol}. Due to space constraints, it is omitted.
\end{proof}

Lemma \ref{pi_3_relation} establishes that the evolution of the $\Pi_t^3$ is Markovian. Thus, we can use the standard centralized DP decomposition from \cite{bertsekas1973sufficiently} to state the following result.

\begin{theorem} \label{pbp_struct_result_3}
    In Problem \ref{problem_5}, without loss of optimality, we can restrict attention to prescription strategies $\boldsymbol{\psi}^{*3}$ and partial strategies $\hat{\boldsymbol{g}}^{*3}$ with the structural form
    \begin{align} \label{eq_pbp_struct_result_3}
        \Gamma_t^{[3,n]} &= \psi_t^{*[3,n]}\big(\Pi_t^3\big), \quad n =1,2,\\
        \hat{U}_t^{3} &= \hat{g}_t^{*3}\big(\Pi_t^3\big), \quad t = 0,\dots,T.
    \end{align}
\end{theorem}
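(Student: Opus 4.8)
The plan is to recognize that Problem \ref{problem_5} is a fully centralized, partially observed minimax control problem in which subsystem $3$ is the sole decision maker, with state $S_t^3 = \{\hat{X}_t, \Pi_t^1, \Pi_t^2\}$, observation $Z_t^3$, complete action $\Theta_t^3 = (\Gamma_t^{[3,1]}, \Gamma_t^{[3,2]}, \hat{U}_t^3)$, dynamics $\bar{f}_t^3$ and $\bar{h}_t^3$, and terminal cost $\bar{d}_T^3$. Unlike the analyses of subsystems $1$ and $2$, there is no residual perfectly observed component of the state here: because subsystem $3$ sits at the top of the hierarchy, its entire common information $C_t^3 = Z_{0:t}^3$ is consumed in forming the information state $\Pi_t^3$ of \eqref{pi_3_def}, which is precisely why we expect the optimal strategies to depend on $\Pi_t^3$ alone rather than on $\Pi_t^3$ together with an extra observed set. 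The task thus reduces to showing that $\Pi_t^3$ is a sufficient statistic for the worst-case cost-to-go and then invoking the centralized minimax DP of \cite{bertsekas1973sufficiently}, exactly as in Theorems \ref{pbp_struct_result} and \ref{pbp_struct_result_2}.

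First I would verify the two ingredients that qualify $\Pi_t^3$ as an information state. The Markovian, strategy-independent evolution $\Pi_{t+1}^3 = \tilde{f}_t^3(\Pi_t^3, \Gamma_t^{[3,1]}, \Gamma_t^{[3,2]}, \hat{U}_t^3, Z_{t+1}^3)$ is supplied directly by Lemma \ref{pi_3_evol}. It then remains to check that the terminal cost is evaluable from the information state: since $P_T^3$ collects every triple $s_T^3 = (\hat{x}_T, P_T^1, P_T^2)$ compatible with the realized $c_T^3$ and $\theta_{0:T-1}^3$, the worst-case terminal cost consistent with the information at time $T$ is $\max_{s_T^3 \in P_T^3} \bar{d}_T^3(s_T^3)$, a function of $P_T^3$ only (recall $\bar{d}_T^3(s_T^3)$ depends on the $\hat{x}_T$ component alone). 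Together these establish that $\Pi_t^3$ carries all the information needed both to propagate and to evaluate the worst-case cost.

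Next I would set up the backward minimax recursion on the information state. Define $V_T(P_T^3) := \max_{s_T^3 \in P_T^3} \bar{d}_T^3(s_T^3)$ and, for $t = T-1, \dots, 0$,
\begin{equation*}
    V_t(P_t^3) = \min_{\theta_t^3} \, \max_{z_{t+1}^3} \, V_{t+1}\big(\tilde{f}_t^3(P_t^3, \theta_t^3, z_{t+1}^3)\big),
\end{equation*}
where the inner maximization ranges over the next observations $z_{t+1}^3$ feasible given $P_t^3$ and $\theta_t^3$. Because the argument of $V_{t+1}$ depends on the entire past only through $P_t^3$ and the chosen complete action, the minimizing $\theta_t^3$ at each stage may be taken as a function of $P_t^3$; this is the standard sufficiency conclusion for centralized minimax problems in \cite{bertsekas1973sufficiently}. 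Writing the resulting optimal complete-action strategy as $\Theta_t^3 = \theta_t^{*3}(\Pi_t^3)$ and decomposing $\Theta_t^3 = (\Gamma_t^{[3,1]}, \Gamma_t^{[3,2]}, \hat{U}_t^3)$ yields exactly the structural form \eqref{eq_pbp_struct_result_3}.

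The main obstacle is conceptual rather than computational: one must be confident that $\Pi_t^3$, a set-valued object whose elements themselves contain the lower subsystems' set-valued information states $P_t^1$ and $P_t^2$, is a legitimate sufficient statistic for the worst-case problem. Once Lemma \ref{pi_3_evol} furnishes its Markovian evolution and the terminal cost is shown evaluable from it, no further structural argument is needed and the remainder is a direct invocation of the Bertsekas--Rhodes dynamic program, mirroring the earlier theorems verbatim.
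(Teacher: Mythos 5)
Your proposal is correct and follows essentially the same route as the paper, which proves this theorem by appealing to the standard centralized minimax dynamic program of Bertsekas--Rhodes on the information state $\Pi_t^3$, whose Markovian, strategy-independent evolution is supplied by Lemma \ref{pi_3_evol}; your backward recursion is exactly the value function the paper later writes down in its DP decomposition. The only difference is that you spell out the sufficiency argument that the paper omits for space, including the correct observation that, unlike for subsystems $1$ and $2$, there is no residual perfectly observed component $C_t^{4}$, so the optimal laws depend on $\Pi_t^3$ alone.
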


\begin{proof}
This proof follows similar arguments for centralized minimax control problems \cite{bertsekas1973sufficiently}. Due to space constraints, it is omitted.
\end{proof}

As a result of Theorem \ref{pbp_struct_result_3}, we conclude that we can derive an optimal partial strategy $\hat{\boldsymbol{g}}^*$ for Problem \ref{problem_2} with the form
\begin{align}
    \hat{U}_t^n &= \hat{g}_t^{*n}(\Pi_t^{n:3}), \quad n = 1,2,3, \quad t =0,\dots,T. \label{eq_thm_3_1}
\end{align}

\subsection{Result for $N$ Subsystems}

The results equivalent to Theorems \ref{pbp_struct_result} -  \ref{pbp_struct_result_3} for $N \in \mathbb{N}$ subsystems can be proven using arguments similar to those for $3$ subsystems and mathematical induction. The steps for the mathematical induction are detailed in Appendix A of our online preprint \cite{Dave2021arxiv}. Here, due to space constraints, we simply state the main result for Problem \ref{problem_2} and Problem \ref{problem_1}.

\begin{theorem} \label{struct_result_k}
In Problem \ref{problem_2}, without loss of optimality, we can restrict attention to partial strategies $\hat{\boldsymbol{g}}^*$ with the form
\begin{gather}
    \hat{U}_t^n = \hat{g}_t^{*n}(\Pi_t^{n:N}), \quad t = 0,\dots,T. \label{eq_struct_result_k}
\end{gather}
Furthermore, in Problem \ref{problem_1}, there exists an optimal control strategy $\boldsymbol{g}^{*k,n}$ for each $k \in \mathcal{K}^n$ and $n \in \mathcal{N}$, with the structural form
\begin{gather}
    U_t^{k,n} = g_t^{*k,n}(Y_t^{k,n},L_t^{k,n}, \Pi_t^{n:N}), \quad t = 0,\dots,T. \label{eq_struct_result_k_agent}
\end{gather}
\end{theorem}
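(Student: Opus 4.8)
The plan is to induct on the subsystem index $n$, using the three-subsystem derivation in Theorems \ref{pbp_struct_result}--\ref{pbp_struct_result_3} as the template. I take as the inductive hypothesis $H(n)$ (for $1 \le n \le N$) the conjunction of two claims: first, that the information state $\Pi_t^n$ admits a Markovian, strategy-independent recursion $\Pi_{t+1}^n = \tilde f_t^n(\Pi_t^n, \Gamma_t^{[n,1:n-1]}, \hat U_t^{n:N}, Z_{t+1}^n)$, generalizing Lemmas \ref{pi_1_evol} and \ref{pi_2_evol}; and second, that there exists a globally optimal partial strategy $\hat{\boldsymbol{g}}^*$ for Problem \ref{problem_2} in which, for every $m \le n$ and all $t$, $\hat U_t^m = \hat g_t^{*m}(\Pi_t^{m:n}, C_t^{n+1})$, where $C_t^{N+1} := \emptyset$. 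The base case $H(1)$ is Theorem \ref{pbp_struct_result} together with the person-by-person argument closing Subsection \ref{pbp_agent_1}, which gives $\hat U_t^1 = \hat g_t^{*1}(\Pi_t^1, C_t^2)$. Since $C_t^{N+1}$ is empty, evaluating the second claim of $H(N)$ yields \eqref{eq_struct_result_k}.

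For the step $H(n-1) \Rightarrow H(n)$, I would replicate the Subsystem $2$ and Subsystem $3$ constructions. Restricting subsystems $1, \dots, n-1$ to their $H(n-1)$ forms and fixing arbitrary strategies $\hat{\boldsymbol{g}}^{n+1}, \dots, \hat{\boldsymbol{g}}^N$, I let subsystem $n$ issue, for each $m < n$, a prescription $\Gamma_t^{[n,m]} = \psi_t^{[n,m]}(C_t^n)$ defined on the information states $\Pi_t^{m:n-1}$ that subsystem $m$ accesses but subsystem $n$ does not, so that subsystem $m$ recovers $\hat U_t^m = \Gamma_t^{[n,m]}(\Pi_t^{m:n-1})$. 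This turns subsystem $n$ into the sole decision maker with complete action $\Theta_t^n := (\Gamma_t^{[n,1]}, \dots, \Gamma_t^{[n,n-1]}, \hat U_t^n)$ and augmented state $S_t^n := \{\hat X_t, \Pi_t^{1:n-1}, C_t^{n+1}\}$. The analogue of Lemma \ref{lem_pbp_suff} is then to write $S_{t+1}^n$ as $\bar f_t^n(S_t^n, \Theta_t^n, W_t, V_{t+1}^{1:N})$: $\hat X_{t+1}$ follows from Lemma \ref{eq_St}, the terms $\Pi_{t+1}^{1:n-1}$ from the recursions $\tilde f_t^1, \dots, \tilde f_t^{n-1}$ already available by $H$, and the update $C_{t+1}^{n+1} = C_t^{n+1} \cup Z_{t+1}^{n+1}$ from $\hat h_t^{n+1}$; the key point is that the actions $\hat U_t^{n+1:N}$ of the fixed later subsystems are recoverable from the state, since nestedness gives $C_t^m \subseteq C_t^{n+1}$ for every $m > n$, so the component $C_t^{n+1}$ of $S_t^n$ suffices to compute them.

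The crux, and what I expect to be the main obstacle, is establishing the first claim of $H(n)$: the Markovian, strategy-independent recursion for $\Pi_t^n$. I would generalize the interim-set argument of Lemma \ref{pi_2_evol}: from the realizations $\{\gamma_t^{[n,1:n-1]}, \hat u_t^{n:N}, z_{t+1}^n\}$, form the interim set $Q_t^n \subseteq P_t^n$ by keeping only those $(\hat x_t, P_t^{1:n-1})$ consistent with $z_{t+1}^n = \hat h_t^n(\hat x_t, \hat u_t^{1:N})$ under the recovered actions $\hat u_t^m = \gamma_t^{[n,m]}(P_t^{m:n-1})$, $m < n$, and then propagate each surviving element forward through $\hat f_t$ and the chain $\tilde f_t^1, \dots, \tilde f_t^{n-1}$ to assemble $P_{t+1}^n$. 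The delicacy is that this nested chain must close on itself: each $\tilde f_t^m$ consumes exactly the prescriptions and actions already carried as arguments of $\tilde f_t^n$, because $\Pi_t^m$ for $m < n$ sits inside $S_t^n$ and the prescriptions determine the earlier partial actions, and each such recursion is strategy-independent by the first claim of $H$ at depths below $n$. With this recursion in hand, the centralized minimax DP of \cite{bertsekas1973sufficiently} applies as in Theorem \ref{pbp_struct_result_2}, yielding optimal prescriptions $\psi_t^{*[n,m]}(\Pi_t^n, C_t^{n+1})$ and action $\hat g_t^{*n}(\Pi_t^n, C_t^{n+1})$. Converting each prescription back to a partial control law via the construction underlying Lemma \ref{lem_hatg_g_relation} and Remark \ref{remark_4} then upgrades $\hat U_t^m = \Gamma_t^{[n,m]}(\Pi_t^{m:n-1})$ to $\hat g_t^{*m}(\Pi_t^{m:n}, C_t^{n+1})$ for all $m \le n$, which is the second claim of $H(n)$; a person-by-person optimality argument, as in Subsection \ref{pbp_agent_1}, lifts this to a globally optimal strategy.

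For the control-strategy form \eqref{eq_struct_result_k_agent}, I would invoke Lemma \ref{lem_hatg_g_relation}. Writing $\hat U_t^n = \hat g_t^{*n}(\Pi_t^{n:N})$ componentwise, each $\hat U_t^{k,n}$ is a map on $\mathcal Y_t^{k,n} \times \mathcal L_t^{k,n}$, and \eqref{u_presc_inv} produces $g_t^{*k,n}(\cdot, \cdot, C_t^n) := \hat g_t^{*k,n}(C_t^n)(\cdot, \cdot)$. It remains to note that $\Pi_t^{n:N}$ is a function of $C_t^n$: the component $\Pi_t^n$ is generated from $C_t^n = Z_{0:t}^n$ through the strategy-independent recursion $\tilde f_t^n$, while $\Pi_t^{n+1}, \dots, \Pi_t^N$ are generated from $C_t^{n+1}, \dots, C_t^N$, each nested inside $C_t^n$. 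Hence the control law uses only information admissible to agent $k$ and takes the form $U_t^{k,n} = g_t^{*k,n}(Y_t^{k,n}, L_t^{k,n}, \Pi_t^{n:N})$, establishing \eqref{eq_struct_result_k_agent}.
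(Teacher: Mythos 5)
Your proposal is correct and follows essentially the same route as the paper's own proof in Appendix A: the same two-part induction hypothesis (well-definedness of the nested information states together with the structural form $\hat{g}_t^{*m}(\Pi_t^{m:n}, C_t^{n+1})$ for preceding subsystems), the same prescription-based reduction to a centralized problem with state $S_t^n = \{\hat{X}_t, \Pi_t^{1:n-1}, C_t^{n+1}\}$, the same interim-set argument for the strategy-independent recursion of $\Pi_t^n$, and the same conversion back to control laws via Lemma \ref{lem_hatg_g_relation}. Your closing observation that $\Pi_t^{n:N}$ is computable from $C_t^n$ is a useful explicit justification of implementability that the paper leaves implicit.
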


\begin{proof}
The proof follows from the arguments detailed in Apprendix A of \cite{Dave2021arxiv}.
\end{proof}

\begin{remark} \label{remark_common_info}
The structural form of optimal control strategies in \eqref{eq_struct_result_k_agent} cannot be obtained by a direct application of the common information approach \cite{gagrani2017decentralized}. Note that the domains of our optimal strategies do not grow in size with time when the feasible sets $\mathcal{X}_t$ and $\mathcal{L}_t^{k,n}$ do not grow in size with time for all $k \in \mathcal{K}^n$ and $n \in \mathcal{N}$. This improves the computational tractability of our approach for larger values of horizon $T$. In contrast, the common information approach considers $C_t^1$ as a part of the private information of all agents in $\mathcal{K}^1$ and $C_t^1$ grows in size with time. Thus, the domains of optimal strategies increase in size with time for $N \geq 2$.  
\end{remark}

\subsection{Dynamic Programming Decomposition} \label{subsection:DP}

In this subsection, we construct a DP decomposition for the optimal prescription and partial strategy of subsystem $N$. Let $\theta_t^N = (\gamma_t^{[N,1]}, \dots, \gamma_t^{[N,N-1]}, \hat{u}_t^N)$ be the realization of $\Theta_t^N$ for all $t=0,\dots,T$. Then, for each possible information state $P_T^N \in \mathcal{P}_T^N$, we define a value function at time $T$ as
$V_T(P_T^N) := \max_{s_T^N \in P_T^N} \bar{d}_T^N(s_T^N)$. Furthermore, for each possible information state $P_t^N \in \mathcal{P}_t^N$, for all $t =0,$ $\dots,T-1$, we iteratively define the value functions
\begin{align}
    V_t(P_t^N) := \min_{\substack{\theta_t^N}}  
    \max_{\substack{z_{t+1}^N \in \tilde{\mathcal{Z}}_{t+1}^N(P_t^N, \theta_t^N)}} V_{t+1}[\tilde{f}_t^N(P_t^N, \theta_t^N, z_{t+1}^N)], \label{value_t}
\end{align}
where the set of feasible values of $z_{t+1}^N$ is given by
\begin{multline*}
    \tilde{\mathcal{Z}}_{t+1}^N(P_t^N, \theta_t^N) := \{z_{t+1}^N \in \mathcal{Z}_{t+1}^N ~|~ z_{t+1}^N = \bar{h}_t^N(s_t,\theta_t^N, w_t, \\ v_{t+1}^{1:N}) \text{ for all } s_t^N \in P_t^N, w_t \in \mathcal{W}_t, v_{t+1}^{n} \in \mathcal{V}_{t+1}^{n}, n \in \mathcal{N}\}.
\end{multline*}
The prescription law at time $t$ for each $n < N$ is $\gamma_t^{*[N,n]} = \psi_t^{*[N,n]}(P_t^N)$ and the partial control law for subsystem $N$ is $\hat{u}_t^{*N} = \hat{g}_t^{*N}(P_t^N)$, i.e., they are the $\arg\inf$ in the RHS of \eqref{value_t}. The corresponding prescription strategy $\boldsymbol{\psi}^{*N}$ and partial strategy $\hat{\boldsymbol{g}}^{*N}$ can be shown to be optimal for $\mathcal{J}^N(\boldsymbol{\psi}^N, \hat{\boldsymbol{g}}^N)$ using standard arguments \cite{bertsekas1973sufficiently, gagrani2017decentralized}.


\begin{remark}
After solving the DP, we can construct optimal control strategies for Problem \ref{problem_1} 
in a manner similar to Lemma \ref{lem_hatg_g_relation}.
\end{remark}

\begin{remark}
When applying our results to an arbitrary decentralized system, there may be multiple feasible ways to allocate the agents to subsystems. For example, consider a three agent system with one-directional communication from agent $3$ to agent $2$ and from agent $2$ to agent $1$. We can either allocate all agents to one subsystem or allocate each agent to a unique subsystem. Both of these are valid options because they both ensure that the common information of each subsystem is nested within the common information of all preceding subsystems. 
However, Theorem \ref{struct_result_k} leads to a different DP for different allocations.
Thus, a system designer must decide on an allocation before deriving the optimal control strategies. In general, our DP performs better when more agents have private information which does not grow in size with time. Thus, in the three agent system with one-directional communication, 
allocating each agent to a unique subsystem is better than allocating all agents to one subsystem.
\end{remark}

\subsection{Extension to Additive Cost}

Consider a variation of our problem where the system incurs a cost $d_t(X_t, U_t^{1:N})$ at each $t=0,\dots,T$ and the performance is measured by the worst-case total cost
\begin{gather} \label{per_cri_tot}
    \Xi (\boldsymbol{g}) := \max_{X_0, W_{0:T}, V_{0:T}^{1:N}} \sum_{t=0}^{T-1} d_t(X_t, U_t^{1:N}) + d_T(X_T).
\end{gather}
We can transform the additive cost in \eqref{per_cri_tot} into a terminal cost using a technique from \cite{bertsekas1973sufficiently, gagrani2017decentralized}. At each $t=0,\dots,T$, we define an uncertain variable $A_t := \sum_{\ell=0}^{t-1} d_\ell(X_\ell, U_\ell^{1:N}) \in \mathcal{A}_t$ which tracks the cost incurred by the system up to time $t$. Note that $A_0 = 0$. 
Then, at each $t$, we consider an augmented state for the system, $\{X_t, A_t\}$ and note that the augmented state at time $t+1$, $\{X_{t+1}, A_{t+1}\}$, evolves as a function of $\{X_{t}, A_{t}\}$, the control actions $U_t^{1:N}$ and the disturbance $W_t$. Then the total cost is simply given by the terminal cost $A_T + d_T(X_T)$ and thus, we can apply our results to this equivalent terminal cost problem. This extension to total cost problems does present a challenge because the information state at time $t$ takes values in a subset of the power set $2^{\mathcal{X}_t \times \mathcal{A}_t}$, which may grow in size with time as $\mathcal{A}_t$ generally grows in size with time. We plan to address this in the future by exploring alternative augmented states.

\section{Numerical Example} \label{section:example}

In this subsection, we validate our results with a simple example. We consider two agents who seek to surround a target. The agents and target can each move along a linear grid with $\Lambda \in \mathbb{N}$ points. Starting at $X^0_0$, the position of the target is updated at each $t=0,\dots,T$ as 
\begin{gather}
    X_{t+1}^0 =
    \begin{aligned}
    \begin{cases}
        X_t^0 + W^0_t, \quad &\text{if } 1 \leq X_t^0 + W_t^0 \leq \Lambda, \\
        \Lambda, \quad &\text{if } X_t^0 + W_t^0 \geq \Lambda, \\
        1, \quad &\text{if } X_t^0 + W_t^0 \leq 1,
    \end{cases}
    \end{aligned}
\end{gather}
with a disturbance $W_t^0 \in \{-1, 0, 1\}$. Each agent occupies a separate subsystem, and thus, we refer to an agent simply by its subsystem $n = 1,2$. At each $t$, each agent $n$ selects an action $U_t^n \in \{-1,0,1\}$, and updates its position as 
\begin{gather}
    X_{t+1}^n =
    \begin{aligned}
    \begin{cases}
        X_t^n + U_t^n, \quad &\text{if } 1 \leq X_t^n + U_t^n \leq \Lambda, \\
        \Lambda, \quad &\text{if } X_t^n + U_t^n \geq \Lambda, \\
        1, \quad &\text{if } X_t^n + U_t^n \leq 1.
    \end{cases}
    \end{aligned}
\end{gather}
The positions of both agents are observed perfectly by both the agents. Additionally, agent $2$ perfectly observes the position of the target, but can only communicate this to agent $1$ with a delay of $1$ time step. Agent $1$ also receives a noisy observation of the target's position $Y_t^1  = \max \{1, \min \{ X_t^0 + V^1_t ,\Lambda\}\}$, with a noise $V^1_t \in \{-1,0\}$. Agent $1$ has faulty equipment and cannot transmit their observations to agent $2$. Thus, for all $t=0,\dots,T$, the information structure is given by $C_t^1 = \{X_{0:t-1}^0, Y_{0:t-1}^1, U_{0:t-1}^{1:2}, X_{0:t}^{1:2}\}$ and $L_t^1 = \{Y_t^1\}$ for agent $1$, and $C_t^2 = \{X_{0:t-1}^0, U_{0:t-1}^2, X_{0:t}^{1:2}\}$ and $L_t^2 = \{X_t^2\}$ for agent $2$. At the onset, both agents receive a common observation $Y_0$ implying that the target's starting position is in the set $\{Y_0-1, Y_0, Y_0+1\}$. The terminal cost is 
\begin{gather}
    d_T(X_T^{0:3})
    =
    \begin{aligned}
    \begin{cases}
        \sum_{n=1}^2|X_T^0 - X_T^n|, \quad &\text{ if } I^s = 1, \\
        D + \sum_{n=1}^2|X_T^0 - X_T^n|, \quad &\text { if } I^s = 0,
    \end{cases}
    \end{aligned}
\end{gather}
where $I^{s} \in \{0,1\}$ indicates if the agents have successfully surrounded the target, $\sum_{n=1}^2|X_T^0 - X_T^n|$ penalizes the distance of the agents from the target, and $D > 0$ is a penalty for failing to surround the target. 
We summarize in Table \ref{summary_table} the optimal worst-case performance achieved for different realizations of the initial conditions $\{x_0^1, x_0^2, y_0\}$, with and without using information states in the DP. We note that their values are the same, which validates our results. 

\begin{table}[ht]
\centering
\caption{Optimal cost for $\Lambda = 8$, $T=3$, and $D = 10$.}
\label{summary_table}
\begin{tabular}{ p{60pt} p{70pt} p{70} }
 \hline 
 \arrayrulecolor{white}
 \hline
 \hline
 \arrayrulecolor{black}
 \multicolumn{1}{c}{$\{x_0^1, x_0^2, y_0\}$} &  \multicolumn{1}{c}{With Information States} & \multicolumn{1}{c}{Without Information States}\\[2pt]
 \hline
 \arrayrulecolor{white}
 \hline
 \hline
 \arrayrulecolor{black}
 \multicolumn{1}{c}{$\{8,8,2\}$} & \multicolumn{1}{c}{$18$} & \multicolumn{1}{c}{$18$}\\ [2pt]
 \multicolumn{1}{c}{$\{3,6,7\}$} & \multicolumn{1}{c}{$4$} & \multicolumn{1}{c}{$4$}\\[2pt]
 \multicolumn{1}{c}{$\{3,3,4\}$} & \multicolumn{1}{c}{$14$} & \multicolumn{1}{c}{$14$}\\[2pt]
 \multicolumn{1}{c}{$\{3,5,8\}$} & \multicolumn{1}{c}{$4$} & \multicolumn{1}{c}{$4$}\\[2pt]
 \hline
\end{tabular}
\end{table}

\section{Conclusion} \label{section:conclusion}

In this paper, we introduced a general model for decentralized control with nested subsystems and a worst-case shared cost. The disturbances to the system were modeled using uncertain variables. We derived a structural form for optimal control strategies in systems with a terminal cost criterion and observed that it can be used to improve computational tractability for long time horizons. Finally, we presented a DP to derive the optimal strategies and validated our results with an example. A potential direction for future research includes specializing our results for systems with specific dynamics. Another important direction of future research is identifying approximation techniques which can improve computational tractability with a bounded loss in optimal performance. Finally, we believe that our results can also be utilized in applications of decentralized reinforcement learning problems with a worst-case cost.

\bibliographystyle{ieeetr}
\bibliography{References}

\begin{thebibliography}{10}

\bibitem{Dave2020SocialMedia}
A.~Dave, I.~V. Chremos, and A.~A. Malikopoulos, ``{Social Media and Misleading
  Information in a Democracy: A Mechanism Design Approach},'' {\em IEEE
  Transactions on Automatic Control}, 2022 (in press).

\bibitem{zabojnik2002centralized}
J.~Zabojnik, ``Centralized and decentralized decision making in
  organizations,'' {\em Journal of Labor Economics}, vol.~20, no.~1, pp.~1--22,
  2002.

\bibitem{mahbub2021_platoonMixed}
A.~M.~I. Mahbub and A.~A. Malikopoulos, ``{A Platoon Formation Framework in a
  Mixed Traffic Environment},'' {\em IEEE Control Systems Letters (LCSS)},
  vol.~6, pp.~1370--1375, 2021.

\bibitem{mahajan2012}
A.~Mahajan, N.~C. Martins, M.~C. Rotkowitz, and S.~Y{\"u}ksel, ``Information
  structures in optimal decentralized control,'' in {\em 2012 IEEE 51st IEEE
  Conference on Decision and Control (CDC)}, pp.~1291--1306, IEEE, 2012.

\bibitem{nayyar2015structural}
A.~Nayyar and L.~Lessard, ``Structural results for partially nested lqg systems
  over graphs,'' in {\em 2015 American Control Conference (ACC)},
  pp.~5457--5464, IEEE, 2015.

\bibitem{Dave2019a}
A.~Dave and A.~A. Malikopoulos, ``{Decentralized Stochastic Control in
  Partially Nested Information Structures},'' in {\em IFAC-PapersOnLine},
  vol.~{52}, (Chicago, IL, USA), pp.~{97--102}, 2019.

\bibitem{17}
A.~Nayyar, A.~Mahajan, and D.~Teneketzis, ``Decentralized stochastic control
  with partial history sharing: A common information approach,'' {\em IEEE
  Transactions on Automatic Control}, vol.~58, no.~7, pp.~1644--1658, 2013.

\bibitem{dave2020structural}
A.~Dave and A.~A. Malikopoulos, ``Structural results for decentralized
  stochastic control with a word-of-mouth communication,'' in {\em 2020
  American Control Conference (ACC)}, pp.~2796--2801, IEEE, 2020.

\bibitem{Dave2021a}
A.~Dave and A.~A. Malikopoulos, ``A dynamic program for a team of two agents
  with nested information,'' in {\em 60th IEEE Conference on Decision and
  Control (CDC)}, pp.~3768--3773, IEEE, 2021.

\bibitem{Malikopoulos2021}
A.~A. Malikopoulos, ``On team decision problems with nonclassical information
  structures,'' {\em IEEE Transactions on Automatic Control}, 2022
  arXiv:2101.10992.

\bibitem{Dave2022a}
A.~Dave, N.~Venkatesh, and A.~A. Malikopoulos, ``On decentralized control of
  two agents with nested accessible information,'' {\em 2022 American Control
  Conference (ACC)}, 2022 (to appear).

\bibitem{rotkowitz2005characterization}
M.~Rotkowitz and S.~Lall, ``A characterization of convex problems in
  decentralized control,'' {\em IEEE transactions on Automatic Control},
  vol.~50, no.~12, pp.~1984--1996, 2005.

\bibitem{shah2013cal}
P.~Shah and P.~A. Parrilo, ``$\mathcal{H}_2$-optimal decentralized control over
  posets: A state-space solution for state-feedback,'' {\em IEEE Transactions
  on Automatic Control}, vol.~58, no.~12, pp.~3084--3096, 2013.

\bibitem{lessard2012optimal}
L.~Lessard and S.~Lall, ``Optimal controller synthesis for the decentralized
  two-player problem with output feedback,'' in {\em 2012 American Control
  Conference (ACC)}, pp.~6314--6321, IEEE, 2012.

\bibitem{gagrani2017decentralized}
M.~Gagrani and A.~Nayyar, ``Decentralized minimax control problems with partial
  history sharing,'' in {\em 2017 American Control Conference (ACC)},
  pp.~3373--3379, IEEE, 2017.

\bibitem{gagrani2020worst}
M.~Gagrani, Y.~Ouyang, M.~Rasouli, and A.~Nayyar, ``Worst-case guarantees for
  remote estimation of an uncertain source,'' {\em IEEE Transactions on
  Automatic Control}, vol.~66, no.~4, pp.~1794--1801, 2020.

\bibitem{nair2013nonstochastic}
G.~N. Nair, ``A nonstochastic information theory for communication and state
  estimation,'' {\em IEEE Transactions on automatic control}, vol.~58, no.~6,
  pp.~1497--1510, 2013.

\bibitem{rangi2019towards}
A.~Rangi and M.~Franceschetti, ``Towards a non-stochastic information theory,''
  in {\em 2019 IEEE International Symposium on Information Theory (ISIT)},
  pp.~997--1001, IEEE, 2019.

\bibitem{bertsekas1973sufficiently}
D.~Bertsekas and I.~Rhodes, ``Sufficiently informative functions and the
  minimax feedback control of uncertain dynamic systems,'' {\em IEEE
  Transactions on Automatic Control}, vol.~18, no.~2, pp.~117--124, 1973.

\bibitem{bernhard2003minimax}
P.~Bernhard, ``Minimax - or feared value - $\text{$L$}_1$ / $\text{$L$}_\infty$
  control,'' {\em Theoretical computer science}, vol.~293, no.~1, pp.~25--44,
  2003.

\bibitem{xie2020optimally}
Y.~Xie, J.~Dibangoye, and O.~Buffet, ``Optimally solving two-agent
  decentralized pomdps under one-sided information sharing,'' in {\em
  International Conference on Machine Learning}, pp.~10473--10482, PMLR, 2020.

\bibitem{Dave2021arxiv}
A.~Dave, N.~Venkatesh, and A.~A. Malikopoulos, ``On decentralized minimax
  control with nested subsystems,'' {\em arXiv:2109.06328}, 2022.

\end{thebibliography}

\section*{Appendix A - $N$ Subsystems}

In this subsection, we describe the steps to prove our results using mathematical induction for $N \in \mathbb{N}$ subsystems. The analysis for subsystem $1$ is the same in the presence of $N$ subsystems as in Section \ref{pbp_agent_1}. As in \eqref{pi_1_def}, we can define the information state for subsystem $1$ at any $t$ as the set-valued uncertain variable $\Pi_t^1$ which contains feasible values of $\hat{X}_t \in \hat{\mathcal{X}}^1_t$ given $\{C_t^1, \hat{U}_{0:t-1}^{1:N}\}$, and takes values in $\mathcal{P}_t^1 \subseteq 2^{\hat{\mathcal{X}_t}}$.
Using the same arguments as Theorem \ref{pbp_struct_result}, 
in Problem \ref{problem_2}, without loss of optimality, we can restrict attention to partial strategies $\hat{\boldsymbol{g}}^{*1}$ with the form
\begin{gather} \label{agent_1_in_K}
    \hat{U}_t^1 = \hat{g}_t^{*1}(\Pi_t^1, C_t^2), \quad t = 0,\dots,T.
\end{gather}
This forms the basis of our mathematical induction. Given \eqref{agent_1_in_K}, we can iteratively prove the results for each subsystem $n \in \{2,\dots,N\}$. For any $n \in \mathcal{N}$, we consider the induction hypotheses for all $m \in \mathcal{N}$ where $m < n$: 

\textit{Hypothesis 1)} The information state is well defined for all $t$ as an appropriate set-valued uncertain variable $\Pi_t^m$ which takes values in a space $\mathcal{P}_t^m \subseteq 2^{\hat{\mathcal{X}}_t} \times 2^{\mathcal{P}_t^{1}} \times \dots \times 2^{\mathcal{P}_t^{m-1}}$.

\textit{Hypothesis 2)} Without loss of optimality in Problem \ref{problem_2}, we can restrict attention to partial strategies $\hat{\boldsymbol{g}}^{*}$ with the form $\hat{U}_t^{m} = \hat{g}_t^{*m}(\Pi_t^{m : n-1}, C_t^n)$ for all $m < n$ and $t=0,\dots,T$.

\begin{remark}
In hypothesis 2, we consider that a structural form using information states has already been derived for each subsystem $m < n$. Note that the form in \eqref{agent_1_in_K} for $\hat{\boldsymbol{g}}^{*1}$ is consistent with this for $n = 2$. We later show that the structural form of strategies derived for each subsystem $n \in \mathcal{N}$ is also consistent with hypothesis 2.
\end{remark}

Given the two hypotheses, the proof by mathematical induction follows from the following steps:

\textit{Step 1)} For all $t=0,\dots,T$, we define a prescription by subsystem $n$ for all $m < n$ as a mapping $\Gamma_t^{[n,m]} : \mathcal{P}_t^m\times$ $ \dots \times \mathcal{P}_t^{n-1} \to \hat{\mathcal{U}}_t^m$
which takes values in a finite set $\mathcal{F}_t^{[n,m]}$. Each prescription $\Gamma_t^{[n,m]}$ is generated using a prescription law $\psi_t^{[n,m]}: \mathcal{C}_t^n \to \mathcal{F}_t^{[n,m]}$ which yields $\Gamma_t^{[n,m]} = \psi_t^{[n,m]}(C_t^n)$. We define the prescription strategy of subsystem $n$ as $\boldsymbol{\psi}^n := (\psi_t^{[n,m]}: m < n, t = 0,\dots, T)$. Furthermore, we define the complete action of subsystem $n$ as $\Theta_t^n := (\Gamma_t^{[n,1]}, \dots, \Gamma_t^{[n,n-1]},\hat{U}_t^n)$ for all $t=0,\dots,T$. 

\textit{Step 2)} 
We fix the partial control strategies $\hat{\boldsymbol{g}}^{n+1}, \dots, \hat{\boldsymbol{g}}^{N}$ of all subsystems $n+1,\dots,N$, and use the person-by-person approach to derive a structural form for $\hat{\boldsymbol{g}}^{*n}$. We construct a state for subsystem $n$ as $S_t^n := \{\hat{X}_t,\Pi_t^1, \dots, \Pi_t^{n-1}, C_t^{n+1}\},$
which takes values in a finite collection of sets $\mathcal{S}_t^n$. Note that $C_t^{N+1} := \emptyset$. As in Lemma \ref{lem_pbp_suff}, we can also construct a state evolution function $\bar{f}^n_t(\cdot)$ such that ${S}^n_{t+1} = \bar{f}^n_{t}({S}^n_t, \Theta_t^{n}, W_{t}, V_{t+1}^{1:N})$, and an observation rule $\bar{h}^n_t(\cdot)$ which yields $Z^n_{t+1} = \bar{h}^n_t({S}^n_t, \Theta_t^{n})$ to obtain a centralized problem with state $S_t^n$, observation $Z_t^n$ and complete action $\Theta_t^n$ for all $t$. Furthermore, we can construct a cost function $\bar{d}_T^n(\cdot)$ which yields a terminal cost $\bar{d}_T^n(S_T^n) := \hat{d}_T(\hat{X}_T)$ and a performance criterion
$\mathcal{J}^n(\boldsymbol{\psi}^n, \hat{\boldsymbol{g}}^n) = \max_{X_0, W_{0:T}, V_{0:T}^{1:N}} \bar{d}_T^n(S_T^n)$.

\textit{Step 3)} Using the same arguments as Lemma \ref{lem_hatg_g_relation}, we can prove that for each prescription strategy $\boldsymbol{\psi}^n$, we can construct partial strategies $(\hat{\boldsymbol{g}}^1,\dots, \hat{\boldsymbol{g}}^{n-1})$ that lead to the same partial actions $(\hat{U}_t^{1},\dots,\hat{U}_t^{n-1})$ for all $t$, and vice versa. Furthermore, this construction ensures that after fixing strategies $\hat{\boldsymbol{g}}^{n+1}, \dots, \hat{\boldsymbol{g}}^{N}$, it holds that $\mathcal{J}^n(\boldsymbol{\psi}^n, \hat{\boldsymbol{g}}^n) = \mathcal{J}(\hat{\boldsymbol{g}})$.


\textit{Step 4)} 
In the constructed centralized problem of $\min_{\boldsymbol{\psi}^n, \hat{\boldsymbol{g}}^n}\mathcal{J}^n(\boldsymbol{\psi}^n, \hat{\boldsymbol{g}}^n)$, the unobserved components in $S_t^n$ need to be estimated at each $t=0,\dots,T$ using an information state. For realizations $c_t^n$ and $\theta_{0:t}^n$ of $C_t^n$ and $\Theta_{0:t}^n$, respectively, the realization of the information state is
$P_t^n := \big\{\hat{x}_t \in \hat{\mathcal{X}}_t, 
    P_t^1 \in \mathcal{P}_t^1, \dots, P_t^{n-1} \in \mathcal{P}_t^{n-1} ~
    |~ \exists \big(x_0 \in \mathcal{X}_0, w_{0:t-1} \in \prod_{\ell=0}^{t-1} \mathcal{W}_{\ell},
    v_{0:t}^{n} \in \prod_{\ell=0}^{t} \mathcal{V}_{\ell}^{n}, \; n \in \mathcal{N} \big)$ s.t. $s_{\ell+1}^n = \bar{f}_\ell^n\big(s_{\ell}^n,\theta_\ell^{n}, w_\ell, v_{\ell+1}^{1:N}\big),
    z_{\ell+1}^n = \bar{h}_\ell^n\big(s_{\ell}^n,\theta_\ell^{n}\big), \; \ell = 0, \dots, t-1
    \big\}$.
Thus, the information state for subsystem $n$ at time $t$ is a set-valued uncertain variable $\Pi_t^n$ which takes values in a finite set $\mathcal{P}_t^n \subseteq 2^{\hat{\mathcal{X}}_t} \times 2^{\mathcal{P}_t^{1}} \times \dots \times 2^{\mathcal{P}_t^{n-1}}$. Using the same sequence of arguments as Lemma \ref{pi_2_evol} we can show that at each time $t$, there exists a function $\tilde{f}_t^{{n}}(\cdot)$ independent of the partial control strategy $\hat{\boldsymbol{g}}$, such that
$\Pi_{t+1}^{{n}} = \Tilde{f}_{t}^{{n}}(\Pi_t^{{n}},\Theta_t^{{n}}, \hat{U}_t^{n+1:N}, Z_{t+1}^{{n}}).$
Furthermore, using the same arguments as Theorems \ref{pbp_struct_result_2} and \ref{pbp_struct_result_3}, we conclude that without loss of optimality, we can restrict attention to prescription strategies $\boldsymbol{\psi}^{*n}$ and partial strategies $\hat{\boldsymbol{g}}^{*n}$ with the structural form
\begin{align} \label{eq_struct_k}
    \Gamma_t^{[n,m]} &= \psi_t^{*[n,m]}(\Pi_t^n, C_t^{n+1}), \; m < n, \; t =0,\dots,T, \\
    \hat{U}_t^n &= g_t^{*n}(\Pi_t^n, C_t^{n+1}), \quad t =0,\dots, T. \label{eq_struct_k_2}
\end{align}
The results \eqref{eq_struct_k} and \eqref{eq_struct_k_2} are consistent with the induction hypotheses and complete the proof by mathematical induction. Then, the first result in Theorem \ref{struct_result_k} follows from \eqref{eq_struct_k} and \eqref{eq_struct_k_2} by constructing a partial control law $\hat{g}_t^{*n}(\Pi_t^{n:N}) := \psi_t^{*[N,n]}(\Pi_t^N)(\Pi_t^{n:N-1})$, for all $n \in \mathcal{N}$ and $t=0,\dots,T$. The second result follows by constructing an appropriate control strategy $\boldsymbol{g}^{*}$ from $\hat{\boldsymbol{g}}^{*}$ using Lemma \ref{lem_hatg_g_relation}.


\end{document}